\documentclass[a4paper, 12pt]{amsart}
\usepackage[active]{srcltx}
\usepackage[all]{xy}
\usepackage{subfig}
\usepackage{hyperref}
\hypersetup{
colorlinks,
allcolors=blue
}
\usepackage{mathrsfs}

\usepackage{amsmath,amssymb,amsthm,a4wide}

\usepackage{enumerate}
\usepackage{color}

\theoremstyle{plain}
\newtheorem{lemma}{Lemma}[section]
\newtheorem{theorem}[lemma]{Theorem}

\newtheorem{corollary}[lemma]{Corollary}
\newtheorem{conjecture}[lemma]{Conjecture}

\theoremstyle{definition}

\newtheorem{definition}[lemma]{Definition}
\newtheorem{remark}[lemma]{Remark}

\theoremstyle{remark}

\numberwithin{equation}{section}

\newcommand{\R}{\mathbb{R}}

\DeclareMathOperator{\0}{\mathbf{0}}

\newcommand{\be}{\begin{equation}}
\newcommand{\ee}{\end{equation}}

\numberwithin{equation}{section}

\usepackage{listings}
\usepackage{xcolor}

\makeatletter
\def\@settitle{\begin{center}%
  \baselineskip15\p@\relax
  \bfseries \large
  \@title
  \end{center}%
}

\def\@setauthors{%
  \begingroup
  \def\thanks{\protect\thanks@warning}%
  \trivlist
  \centering\normalsize \@topsep30\p@\relax 
  \advance\@topsep by -\baselineskip
  \item\relax
  \author@andify\authors
  \def\\{\protect\linebreak}%
  \authors 
  \ifx\@empty\contribs
  \else
    ,\penalty-3 \space \@setcontribs
    \@closetoccontribs
  \fi
  \endtrivlist
  \endgroup
}
 \makeatother

\title[Closed CMC Hypersurfaces]{On the Rigidity of Closed CMC Hypersurfaces in $\mathbb{S}^5(1)$ with Constant Scalar Curvature}

\author[Deng]{Qintao Deng$^{*}$}
\address[Deng]{School of Mathematics and Statistics \& Key Lab NAA--MOE, Central China Normal University, Wuhan 430079, China}
\email{qintaodeng@ccnu.edu.cn}
\thanks{$^{*}$ Supported in part by NSFC (No. 10901067, No. 12271195) and the Special Fund for Basic Scientific Research of Central Colleges (no. CCNU19QN081).}

\author[Kou]{Yunjia Kou}
\address[Kou]{Department of Mathematics and Statistics, Washington State University, Pullman, WA, 99164-3113, USA}
\email{yunjia.kou@wsu.edu}

\subjclass[2020]{53C40, 53C42}
\keywords{CMC hypersurfaces, Constant scalar curvature, Constant
$f_3$, Isoparametric hypersurfaces, Willmore condition}

\begin{document}

\begin{abstract}
Let $M^4\hookrightarrow\mathbb S^5(1)$ be a closed CMC hypersurface
with constant scalar curvature and constant third power sum
$f_3=\sum_{i,j,k}h_{ij}h_{jk}h_{ki}$. We prove that if $M^4$ has
exactly two distinct principal curvatures at some point, then it is
isoparametric. More precisely, it is a Clifford torus
of the form
$\mathbb S^1(r)\times\mathbb S^3(\sqrt{1-r^2})$ or
$\mathbb S^2(r)\times\mathbb S^2(\sqrt{1-r^2})$, where $0<r<1$. Under the additional Willmore condition, we obtain a complete
classification: every closed CMC Willmore hypersurface
$M^4\hookrightarrow\mathbb S^5(1)$ with constant scalar curvature is
isoparametric. Consequently, it is congruent to a totally umbilic
geodesic sphere, the minimal Clifford torus
$\mathbb S^2(1/\sqrt2)\times\mathbb S^2(1/\sqrt2)$, the nonminimal
Clifford torus
$\mathbb S^1(\sqrt3/2)\times\mathbb S^3(1/2)$, or a Cartan minimal
hypersurface. The proofs combine trace-free local tensor identities,
an algebraic analysis of the possible principal curvature multiplicities,
and a weighted differential $3$-form together with a cut-off argument
near the set where principal curvatures coalesce. No sign condition on
the scalar curvature is imposed.
\end{abstract}


\maketitle

\tableofcontents

\vspace{5mm}

\section{Introduction}

\subsection{From Chern's Conjecture to the CMC Problem}

The starting point is Chern's conjecture for closed minimal
hypersurfaces of codimension one in spheres:
\begin{conjecture}[Chern's conjecture \cite{CdS}]\label{conj:chern}
For each $n$, the set of constant values of $S$ attained by closed
minimal hypersurfaces $M^n\subset\mathbb S^{n+1}(1)$ with constant
scalar curvature is discrete.
\end{conjecture}

Its commonly used strong form predicts that every hypersurface in
Conjecture~\ref{conj:chern} is isoparametric. Simons' fundamental
identity yields the first gap:
\begin{theorem}[Simons \cite{S}]\label{thm:simons-first-gap}
Let $M^n\subset\mathbb S^{n+1}(1)$ be a connected closed minimal
hypersurface. Then
\[
    \int_M S(S-n)\,dM\geq0.
\]
In particular, if $0\leq S\leq n$, then $S\equiv0$ or $S\equiv n$.
\end{theorem}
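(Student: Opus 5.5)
The plan is to derive Simons' identity for the second fundamental form $h_{ij}$ of a minimal hypersurface in $\mathbb S^{n+1}(1)$, integrate it over the closed manifold $M$, and read off the sign condition.

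\textbf{Step 1 (Codazzi).} In the sphere the ambient curvature tensor gives no normal contribution, so the Codazzi equation says $h_{ijk}$ is totally symmetric.

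\textbf{Step 2 (Laplacian of $h_{ij}$).} Using the symmetry of $h_{ijk}$, commute covariant derivatives via the Ricci identity
\[
h_{ijkl}-h_{ijlk}=\sum_m h_{mj}R_{mikl}+\sum_m h_{im}R_{mjkl}.
\]
Substitute the Gauss equation
\[
R_{ijkl}=\delta_{ik}\delta_{jl}-\delta_{il}\delta_{jk}+h_{ik}h_{jl}-h_{il}h_{jk}
\]
and use $H=\sum_i h_{ii}=0$. This gives
\[
\Delta h_{ij}=\sum_k h_{kkij}+(n-S)\,h_{ij}=(n-S)\,h_{ij},
\]
where $h_{kkij}=0$ because $H\equiv 0$.

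\textbf{Step 3 (Simons' formula).} Contracting with $h_{ij}$ yields
\[
\tfrac12\Delta S=|\nabla h|^2+\sum_{i,j}h_{ij}\Delta h_{ij}=|\nabla h|^2+S(n-S).
\]

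\textbf{Step 4 (integration).} Since $M$ is closed, the divergence theorem gives $\int_M \Delta S\,dM=0$. Hence
\[
\int_M S(S-n)\,dM=\int_M|\nabla h|^2\,dM\ge 0,
\]
which is the inequality in Theorem~\ref{thm:simons-first-gap}.

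\textbf{Step 5 (gap).} Suppose $0\le S\le n$ pointwise. Then the integrand $S(S-n)$ is $\le 0$ everywhere while its integral is $\ge 0$. So $S(S-n)\equiv0$, and by continuity and connectedness of $M$, either $S\equiv0$ or $S\equiv n$.

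Note that the gap statement as written reads "$S\equiv0$ or $S\equiv n$" and presumes $S$ is pointwise bounded by $n$. No constancy of $S$ is needed: continuity of $S$ together with connectedness already forces one of the two values globally.

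The main point of care is the curvature bookkeeping in Step 2: the sign conventions in the Ricci identity and the trace terms from the Gauss equation. The decisive term is the contribution of $\sum_k h_{ik}h_{kj}$ times $H$, which vanishes exactly because $H=0$. Everything after Step 2 is routine.
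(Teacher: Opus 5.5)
Your proof is correct, and it is the standard argument: for $H=0$ you get $\Delta h_{ij}=(n-S)h_{ij}$, hence $\tfrac12\Delta S=|\nabla h|^2+S(n-S)$, and you integrate over the closed manifold. This is exactly what the paper means by ``Simons' fundamental identity yields the first gap''; the paper gives no proof of its own and only cites \cite{S}, and your continuity-and-connectedness argument for the gap (with no a priori constancy of $S$) is also sound.
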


The equality case $S=n$ was characterized independently by
Chern--do Carmo--Kobayashi \cite{CdS} and Lawson \cite{L}: the
hypersurface is a minimal Clifford hypersurface. The next value suggested by isoparametric theory is $2n$, realized by
Cartan's examples \cite{C},
see also Münzner's general theory \cite{M}. This initiated
the second-gap problem for closed minimal hypersurfaces with constant
$S$. In dimension three, Chang \cite{Ch} completed the classification and
obtained $S\in\{0,3,6\}$. In higher dimensions, the second-gap problem initiated by the estimates of Peng--Terng \cite{PT1,PT3}, was further
developed by Yang--Cheng \cite{YC1,YC2,YC3} and Suh--Yang \cite{SY}.
More recently, Cheng--Wei--Yamashiro \cite{CWY} sharpened the
second-gap estimate for complete minimal hypersurfaces of dimension
$n\geq5$ under the additional assumption that $f_3$ is constant: if
$S>n$, then $S>1.8252n-0.712898$.

\vspace{1mm}

The preceding discussion is specific to hypersurfaces. In higher
codimension, the situation is markedly different. Firester--Tsiamis
\cite{FT} recently disproved the corresponding discreteness conjecture
for all $n\geq3$ and codimension $m\geq4$, and also for even $n\geq4$
and $m\geq3$. On the other hand, Ge--Li--Zhang \cite{GLZ} established
an explicit second-gap rigidity theorem under the additional assumption
that the normal bundle is flat. Since flatness of the normal bundle is
automatic in codimension one, these results further underscore the
distinctive nature of the hypersurface setting.

\vspace{2.7mm}

The corresponding constant mean curvature (CMC) problem is not merely
a change of terminology. If $H$ denotes the normalized mean curvature,
then the assumptions that $H$ and the scalar curvature are constant fix
the first two power sums of the principal curvatures,
\[
    f_1=\sum_{i=1}^n\lambda_i=nH,
    \qquad
    f_2=\sum_{i=1}^n\lambda_i^2=S,
\]
but do not control the higher power sums. This residual algebraic
freedom is absent in dimension three once the characteristic polynomial
is taken into account, but becomes essential in higher dimensions.
Accordingly, two natural ways to recover rigidity are to prescribe
additional power sums or to restrict the number of distinct principal
curvatures. The role of the third power sum $f_3$ in dimension four will
be developed in the next subsection, where it arises geometrically from
the Willmore equation.

The three-dimensional CMC problem was settled completely. de Almeida
and Brito \cite{AB} first proved the result under the additional
assumption that the scalar curvature is positive. Chang then removed
this restriction by a differentiation argument:
\begin{theorem}[Chang \cite{Ch1}]\label{th:Ch1}
Let $M^3\subset\mathbb S^4(1)$ be a closed immersed hypersurface with
constant mean curvature and constant scalar curvature. Then $M^3$ is
isoparametric.
\end{theorem}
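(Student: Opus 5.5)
The plan is to reduce the statement to a question about the principal curvatures $\lambda_1,\lambda_2,\lambda_3$, which are the roots of the characteristic polynomial of the shape operator. The first two power sums are fixed: $f_1=3H$ and $f_2=S$. By Newton's identities, only $f_3$ (equivalently the determinant $\sigma_3$) is still free. So it suffices to show that $f_3$ is constant on $M$. Once $f_3$ is constant, all elementary symmetric functions of the $\lambda_i$ are constant, the principal curvatures are constant, and $M^3$ is isoparametric by definition.

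To see that $f_3$ is constant I would argue by contradiction and work in a frame. On the open dense set where the principal curvatures are distinct, choose a local orthonormal frame $\{e_i\}$ of principal directions, so $h_{ij}=\lambda_i\delta_{ij}$. Differentiating the constancy of $f_1$ and $f_2$ gives, for each $k$,
\[
\sum_i h_{iik}=0,\qquad \sum_i\lambda_i h_{iik}=0 .
\]
For each fixed $k$, this is a $2\times3$ Vandermonde-type linear system for the vector $(h_{11k},h_{22k},h_{33k})$. Hence $(h_{11k},h_{22k},h_{33k})$ is proportional to
\[
\bigl(\lambda_2-\lambda_3,\ \lambda_3-\lambda_1,\ \lambda_1-\lambda_2\bigr).
\]
By Codazzi symmetry, the only remaining unknown is the off-diagonal component $h_{123}$. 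The constancy of $S$ also gives $\sum_{i,j,k}h_{ijk}^2=|\nabla h|^2$, and Simons' identity gives
\[
\tfrac12\Delta S=0=|\nabla h|^2+\sum_i\lambda_i\,(\text{curvature terms}).
\]
Here the curvature terms are an explicit polynomial in the $\lambda_i$ involving $3S-9H^2+3Hf_3-S^2$. This links $|\nabla h|^2$ to $f_3$.

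Next comes Chang's differentiation argument. I would apply the same Simons-type computation one order higher: compute $\Delta f_3$ and $\tfrac12\Delta|\nabla h|^2$ in terms of $h_{ijk}$, $h_{ijkl}$ and the $\lambda_i$. Because the $h_{iik}$ are determined up to a scalar, the Ricci identities $h_{ijkl}-h_{ijlk}=\dots$ together with the commutation relations give algebraic equations linking the scalars $h_{iik}$, $h_{123}$, and the gradient of $f_3$. Integrating $\Delta f_3$ and $\Delta|\nabla h|^2$ over the closed manifold $M$ should then give an integral identity of the form
\[
\int_M \Phi(\lambda)\,|\nabla f_3|^2\,dM=0,
\]
with $\Phi$ of a definite sign. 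That forces $\nabla f_3\equiv0$.

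The main obstacle is the set where principal curvatures coincide: there the frame is not smooth and the quotient $h_{iik}/(\lambda_j-\lambda_k)$ may blow up. I would handle this in two ways. First, I would show that if $f_3$ were nonconstant, then near a regular value of $f_3$ the curvatures are distinct on an open set, and the local argument gives a contradiction there. Second, I would express everything through globally defined quantities, namely $f_3$, $|\nabla h|^2$ and the discriminant $\prod_{i<j}(\lambda_i-\lambda_j)^2$, which is a polynomial in $H,S,f_3$. The integration is then legitimate, and if necessary I would use a cut-off near the discriminant zero set. Ensuring that $\Phi$ has a fixed sign without any assumption on the sign of the scalar curvature is the delicate algebraic step, and it is exactly what distinguishes Chang's proof from the de Almeida--Brito argument.
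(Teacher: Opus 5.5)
Your proposal has a genuine gap at exactly the step that carries the theorem. The paper itself gives no proof to compare against: it quotes the result from Chang~\cite{Ch1}, remarking only that Chang removed the sign assumption of de Almeida--Brito ``by a differentiation argument.''

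\textbf{What is correct.} The preliminary steps are fine:
\begin{itemize}
\item constancy of $f_3$ suffices;
\item on the set of distinct principal curvatures, $(h_{11k},h_{22k},h_{33k})=a_k(\lambda_2-\lambda_3,\lambda_3-\lambda_1,\lambda_1-\lambda_2)$;
\item Simons' identity gives $|\nabla h|^2=S^2-3S+9H^2-3Hf_3$.
\end{itemize}
One correction: the free data are $a_1,a_2,a_3$ together with $h_{123}$, not $h_{123}$ alone. Here $e_k(f_3)$ is a nonzero multiple of $a_k$.

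\textbf{The missing step.} The identity $\int_M\Phi(\lambda)|\nabla f_3|^2\,dM=0$ with $\Phi$ of one sign is never derived, and the ingredients you list do not produce it.
\begin{itemize}
\item Since $|\nabla h|^2$ is affine in $f_3$, $\Delta|\nabla h|^2=-3H\Delta f_3$ gives nothing new. Expanding it by the Bochner formula instead brings in $|\nabla^2h|^2$ and indefinite quartic terms. That is the Peng--Terng route, which yields only pinching estimates.
\item Now integrate
\[
\Delta f_3=3(3-S)f_3+9Hf_4-9HS+6\sum_{i,j,k}h_{ijk}^2\lambda_i ,
\]
with $f_4$ affine in $f_3$ by Newton's identities. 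Substitute the $a_k$ and eliminate $h_{123}$ via Simons. The integrand becomes a function of $f_3$ plus a quadratic form $\sum_k c_k a_k^2$. Each coefficient $c_k$ is a weighted sum of $\mu_k$ and $2\mu_i+\mu_k$ (with $\mu_i=\lambda_i-H$) against squared curvature gaps, and it has no fixed sign.
\item Already for $H=0$ the zeroth-order part is $3(3-S)f_3$.
\end{itemize}
Nothing in the proposal converts this into a sign-definite multiple of $|\nabla f_3|^2$. This sign problem is precisely the obstruction that forced $R\ge 0$ in \cite{AB}; calling it ``the delicate algebraic step'' does not resolve it.

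\textbf{The coalescence set and the use of closedness.} Your first option is to pass to an open set of distinct principal curvatures near a regular value of $f_3$ and derive a contradiction there from ``the local argument.'' This cannot work as described. An integral identity over $M$ gives no contradiction on a proper open subset. A genuinely pointwise contradiction on such a set would prove the local statement, which already in the minimal case is Bryant's conjecture (Conjecture~\ref{conj:bryant}), recorded in the paper as open. So closedness of $M$ must be used in an essential way. In your second option it enters only through the unproven sign of $\Phi$.

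What is missing is the actual core of the proof. You need either Chang's differentiation argument, which the paper cites but does not reproduce, or an explicit computation that exhibits your $\Phi$ and proves its sign without any hypothesis on the scalar curvature.
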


In higher dimensions, the first strategy leads to rigidity results
under the constancy of further power sums. Tang--Wei--Yan \cite{TWY}
proved that a closed CMC hypersurface
$M^n\subset\mathbb S^{n+1}(1)$ with nonnegative constant scalar
curvature is isoparametric if $f_3,\ldots,f_{n-1}$ and the number $g$ of
distinct principal curvatures are all constant. Tang--Yan \cite{TYan}
subsequently removed the assumption that $g$ is constant:
\begin{theorem}[Tang--Yan \cite{TYan}]
Let $M^n$, $n>3$, be a closed hypersurface in the unit sphere
$\mathbb S^{n+1}$. Suppose that $R_M\geq0$ and that $\sum_{i=1}^n\lambda_i^k,\  k=1,\ldots,n-1,$
are constant. Then $M^n$ is isoparametric. Moreover, if $M^n$ has $n$
distinct principal curvatures somewhere, then $R_M\equiv0$.
\end{theorem}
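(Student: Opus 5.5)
The plan is to show that, under $R_M\ge 0$ and constancy of $f_k=\sum_i\lambda_i^k$ for $k=1,\dots,n-1$, the multiset of principal curvatures is locally rigid, and then use an integral identity to force $\nabla h=0$ or the isoparametric condition. The constancy of $f_1,\dots,f_{n-1}$ fixes the elementary symmetric functions $\sigma_1,\dots,\sigma_{n-1}$. The characteristic polynomial of the shape operator is therefore
\[
P_x(t)=Q(t)+(-1)^n\sigma_n(x),
\]
where $Q$ is a fixed polynomial. Only the constant term varies.

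Consider the open dense set where the number $g$ of distinct principal curvatures is locally constant, and on it use a frame adapted to the eigenspaces. Differentiating $\sum_i\lambda_i^k=\mathrm{const}$ gives
\[
\sum_i \lambda_i^{k-1}h_{iim}=0,\qquad k=1,\dots,n-1,\ m=1,\dots,n.
\]
\begin{enumerate}[(a)]
\item At points with $n$ distinct principal curvatures, this is a Vandermonde-type system with one degree of freedom. Hence $h_{iim}=c_m\,\partial_t\lambda_i$, where all $\partial_t\lambda_i$ are proportional to $1/Q'(\lambda_i)$.
\item At points with $g<n$, the Vandermonde system in the $g$ distinct values with multiplicities has full rank, since $g\le n-1$. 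Hence $e_m(\lambda_i)=0$ for all $i,m$, so the principal curvatures are locally constant there.
\end{enumerate}

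Next, apply the Codazzi equations and the Gauss equation to the connection forms in the standard way, as in Cartan's and Chang's arguments. This expresses the curvature $R_{ijij}=1+\lambda_i\lambda_j$ through $h_{ijk}$ and differences $\lambda_i-\lambda_j$. On the set where all the $\lambda_i$ are constant, the usual Cartan-type computation shows that the isoparametric identity holds.

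The real work is the set where $n$ distinct principal curvatures occur and $\sigma_n$ varies. There I would integrate a Simons-type identity. Since $S$ is constant,
\[
\tfrac12\Delta S=0=|\nabla h|^2+\sum_i\lambda_i(nH)_{,ii}+\tfrac12\sum_{i,j}R_{ijij}(\lambda_i-\lambda_j)^2,
\]
and the Hessian term vanishes because $H$ is constant. Together with $R_M\ge0$ and the sectional-curvature identities, I expect the positivity of the curvature part to fail to balance unless all the $R_{ijij}(\lambda_i-\lambda_j)^2$ terms are controlled. The hope is to deduce $\nabla h\equiv0$, and hence isoparametric, or else to deduce $R_{ijij}$ forced to produce $R_M\equiv 0$ for a contradiction with variability.

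More concretely, I would use higher-order identities: compute $\Delta f_3$ in the same way, obtaining
\[
0=\sum_{i,j,k}\lambda_i h_{ijk}^2\cdot 6+3\sum_{i,j}\lambda_i^2R_{ijij}(\lambda_i-\lambda_j),
\]
and similar identities for $f_k$. Taking suitable linear combinations with coefficients given by $1/Q'(\lambda_i)$ should isolate $\sum R_{ijij}$-weighted nonnegative quantities equal to zero. This forces scalar curvature zero whenever $\sigma_n$ is nonconstant, giving the ``moreover'' clause. With $R_M=0$ and the analysis above, one then shows that $\sigma_n$ is in fact constant, using a maximum principle on $\sigma_n$ or analyticity across the closed set of coalescence, so $M$ is isoparametric.

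I expect the main obstacle to be handling the non-generic boundary where eigenvalues coalesce. There the frame is not smooth and the divergence-type integral must be justified. I would first try a cut-off argument near that set, using that it has measure zero and that the relevant integrands are bounded by $|\nabla h|^2$.
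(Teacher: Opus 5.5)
\textbf{This is a citation, not a proof in the paper.} The paper quotes this theorem from Tang--Yan as background and gives no proof of it, so there is no internal argument to compare with. Judged on its own, your plan has a genuine gap.

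\textbf{What works.} Your preliminary reductions are sound.
\begin{itemize}
\item Newton's identities give $P_x(t)=Q(t)+(-1)^n\sigma_n(x)$.
\item On the open set $U=\{g=n\}$ you get $h_{iik}=c_k/\prod_{j\neq i}(\lambda_i-\lambda_j)$, with $c_k$ proportional to $e_k(\sigma_n)$.
\item If $U=\emptyset$, then $\sigma_n$ lies in the finite set of values $c$ for which $Q(t)+(-1)^nc$ has a repeated root. By continuity it is constant, so $M$ is isoparametric.
\end{itemize}

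\textbf{The gap.} It is the decisive case $U\neq\emptyset$. None of the tools you list produces a nontrivial global constraint there.
\begin{itemize}
\item With $H,S,f_3$ constant, Simons' identity only returns $|\nabla h|^2=S^2-nS+n^2H^2-nHf_3$, which is an equality between constants.
\item $R_M\ge 0$ controls only the sum $\sum_{i\neq j}R_{ijij}$. It gives no sign to $\sum R_{ijij}(\lambda_i-\lambda_j)^2$.
\item The relations $\Delta f_k=0$, $k\le n-1$, hold pointwise. Integrating them, or combining them with weights $1/Q'(\lambda_i)$, yields only pointwise algebra.
\item At no stage does $R_M$ appear against a sign-definite integrand.
\item You derive no differential inequality for $\sigma_n$ to which a maximum principle could apply. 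Real-analyticity of CMC hypersurfaces gives unique continuation, not constancy.
\item Your route to the ``moreover'' clause covers only nonconstant $\sigma_n$. It must also cover an isoparametric $M$ with $g=n$.
\end{itemize}

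\textbf{The missing idea.} It is the de Almeida--Brito integral formula, which Tang--Wei--Yan and Tang--Yan extended to all $n$.
\begin{itemize}
\item On $U$, take the $(n-1)$-form built from the connection forms $\omega_{ij}=\sum_k h_{ijk}\omega_k/(\lambda_i-\lambda_j)$. For $n=4$ this is the unweighted version $\Theta=\sum_{i<j}\theta_{ij}$ of $\Phi$ in Definition~\ref{theta:Phi}.
\item The structure equations give $d\Theta=\bigl(\mathcal Q-\tfrac12 R_M\bigr)\mathrm{vol}$. The $h_{ijk}^2$ terms with distinct indices cancel as in \eqref{crossing}, with all weights replaced by $1$.
\item The remainder is $\mathcal Q=\sum_k\sum_{\{i,j\}\not\ni k}b^{(k)}_ib^{(k)}_j$, where $b^{(k)}_i=h_{iik}/(\lambda_i-\lambda_k)$.
\item Insert your one-parameter form of $h_{iik}$ and put $t_i=\lambda_i-\lambda_k$ for $i\neq k$. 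Each inner sum becomes $c_k^2\sum_i t_i^2\prod_{j\neq i,k}(t_i-t_j)$ divided by $-\prod_i t_i^2\prod_{i<j}(t_i-t_j)^2$.
\item By Schur's inequality with even exponent, this is $\le 0$, and it vanishes only if $c_k=0$.
\item Hence $\int d\Theta=0$ together with $R_M\ge 0$ forces $R_M=0$ and $\nabla\sigma_n\equiv 0$ on $U$. Since $\sigma_n$ on $\partial U$ takes a repeated-root value, this gives $U=M$, and both conclusions follow at once.
\end{itemize}

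\textbf{The delicate step.} This is justifying Stokes' theorem on $U$, and it is exactly what had to be handled to drop the constancy of $g$. The coefficients of $\Theta$ blow up like $(\lambda_i-\lambda_j)^{-1}$ at $\partial U$. So ``measure zero plus integrands bounded by $|\nabla h|^2$'' does not suffice. You need a cut-off whose error term $d\eta\wedge\Theta$ either has a sign or tends to zero, in the spirit of Section~5 of the paper.
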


The second strategy imposes a direct restriction on the principal
curvature structure. Chang proved isoparametricity when the number of
distinct principal curvatures is $g=3$, without requiring the constancy
of the higher power sums:
\begin{theorem}[Chang \cite{ChangCMC}]\label{th:ChangCMC}
Let $M^n\subset\mathbb S^{n+1}(1)$ be a closed hypersurface with
constant mean curvature and constant scalar curvature. If $M$ has
exactly three distinct principal curvatures at every point, then $M$ is
isoparametric.
\end{theorem}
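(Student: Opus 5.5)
The plan is to show that the three principal curvature functions are constant. Once that holds, $M$ is isoparametric by definition. Write $\lambda_1,\lambda_2,\lambda_3$ for the distinct principal curvatures, with multiplicities $m_1,m_2,m_3$. Since exactly three distinct values occur at every point, the multiplicities are locally constant, hence constant on the connected manifold $M$. The $\lambda_a$ are then smooth functions, and the eigendistributions $D_a$ are smooth. Constancy of $H$ and of the scalar curvature (through the Gauss equation) gives two relations,
\[
\sum_a m_a\lambda_a=nH,\qquad \sum_a m_a\lambda_a^2=S,
\]
both with constant right-hand sides. The $\lambda_a$ therefore lie on a fixed curve $\Gamma$, the intersection of a plane and a quadric in $\R^3$. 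Differentiating gives $\sum_a m_a d\lambda_a=0$ and $\sum_a m_a\lambda_a d\lambda_a=0$. So all three differentials $d\lambda_a$ are proportional to a single $1$-form $\omega$, with coefficients $c_a(\lambda)$ that are explicit Vandermonde-type expressions, for instance $c_1\propto m_2m_3(\lambda_2-\lambda_3)$. It remains to show that $\omega\equiv0$.

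Step 1 uses the Codazzi equations. Choose a local frame adapted to $D_1\oplus D_2\oplus D_3$. Codazzi gives the standard facts. First, $e_i(\lambda_a)=0$ for $e_i\in D_a$ whenever $m_a\ge2$. Second, $h_{ijk}=0$ unless the indices lie in at least two eigenspaces. Third, $h_{iik}=e_k(\lambda_a)$ for $i\in D_a$. Combined with the proportionality above, we have $e_k(\lambda_a)=c_a\,\omega(e_k)$. If $m_a\ge2$, then $\omega$ vanishes on $D_a$; in particular $c_a\omega(e_k)=0$ for $e_k\in D_a$, and $c_a\neq0$ because the values are distinct. The case analysis then splits according to which $m_a$ equal $1$.

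Step 2 derives a contradiction on the open set $U=\{\omega\ne0\}$. On $U$, the Gauss equation applied to mixed sectional curvatures $K(e_i,e_j)=1+\lambda_a\lambda_b$, with $i\in D_a$ and $j\in D_b$, together with the connection forms $\omega_{ij}=\sum_k \frac{h_{ijk}}{\lambda_a-\lambda_b}\theta_k$, yields second-order identities. The plan is to compute $\Delta\lambda_a$ in two ways, and likewise $\frac12\Delta|A|^2=|\nabla A|^2+\ldots$ via Simons' formula. Constancy of $S$ forces
\[
|\nabla A|^2=-\sum_{i,j}h_{ij}(nH)_{,ij}-\tfrac12\sum_{i,j}(\lambda_i-\lambda_j)^2(1+\lambda_i\lambda_j)=\tfrac12\sum_{i,j}(\lambda_i-\lambda_j)^2(-1-\lambda_i\lambda_j)+\ldots,
\]
and the left side is computed from $|\omega|^2$ and the mixed $h_{ijk}$. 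I would then integrate over closed $M$ a suitable divergence. A natural candidate is $\mathrm{div}(F(\lambda)\nabla\lambda_1)$, with $F$ chosen along $\Gamma$ so that the integrand becomes a nonnegative multiple of $|\omega|^2$ plus terms in $|h_{ijk}|^2$ of a definite sign. Since $M$ is closed and all $\lambda_a$ are globally smooth on $M$ (the distinctness is global), no cut-off is needed here, and the integral identity forces $\omega\equiv0$.

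The main obstacle is Step 2 in the case where some or all multiplicities equal $1$, for example the case $n=3$ with $m_a=1$. There the Codazzi information does not kill $\omega$ on any eigenspace, and the mixed component $h_{123}$ enters the identities with no evident sign. I would first try Chang's trick from Theorem~\ref{th:Ch1}: differentiate the Gauss equation relations once more along the curve $\Gamma$ to get an ODE-type relation for $\lambda_1$ along integral curves of $\nabla\lambda_1$. I would then show that $\lambda_1$ attains its maximum and minimum on $\Gamma$ at interior points where the Hessian has the wrong sign, unless $\lambda_1$ is constant. The maximum principle at extrema of $\lambda_1$ is the fallback if the divergence integrand cannot be made definite.
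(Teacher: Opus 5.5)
The paper does not prove this statement (it quotes it from Chang), so your proposal has to stand on its own. As written it has a genuine gap.

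What you actually establish is the standard reduction: constant multiplicities, smoothness of the $\lambda_a$ on the ellipse $\Gamma$, $d\lambda_a=c_a\omega$ with all $c_a\neq0$, and $\omega|_{D_a}=0$ whenever $m_a\ge2$. The entire content of the theorem is the claim $\omega\equiv0$, and Step~2 only lists strategies you would try. No divergence is computed and no weight $F$ is exhibited. The multiplicity case split announced in Step~1 is never carried out:
for $n\ge4$ some $m_a\ge2$, yet nothing is derived from $\omega|_{D_a}=0$. For $n=3$, the only case with all $m_a=1$, the statement is a special case of Theorem~\ref{th:Ch1}, which is a long argument in its own right rather than a ``trick''. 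There is also a small slip: your claim that $h_{ijk}=0$ when all indices lie in one eigenspace needs $m_a\ge2$, since for $m_a=1$ one has $h_{iii}=e_i(\lambda_a)$.

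The divergence you propose cannot close the gap by itself. For $i\in D_1$, sum the Ricci identity $h_{iijj}-h_{jjii}=(\lambda_i-\lambda_j)(1+\lambda_i\lambda_j)$ over $j$, using $\sum_j h_{jjii}=0$, and apply the second-variation formula for an eigenvalue. This gives
\[
\Delta\lambda_1=(n-S)\lambda_1+nH\lambda_1^2-nH+2\sum_{k}\sum_{j\notin D_1}\frac{h_{ijk}^2}{\lambda_1-\lambda_j}.
\]
The components $h_{ijk}$ with $i,j,k$ in three different eigenspaces are not controlled by Codazzi. The best you can do is eliminate their total through the Simons identity $|\nabla A|^2=S^2-nS+n^2H^2-nHf_3$. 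This identity is exact, with no further terms, because $H$ and $S$ are constant, and its right side is a function on $\Gamma$.

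You then obtain $\mathrm{div}(F\nabla\lambda_1)=F\,G(\lambda)+\mathcal{Q}_F(\omega)$. Here $\mathcal{Q}_F$ is a quadratic form in $\omega$ with coefficients depending on $\lambda,F,F'$, and $G$ is an explicit function on $\Gamma$ that does not involve $F$. At a maximum or minimum point of $\lambda_1$ one has $\omega=0$ (since $c_1\neq0$), so $\Delta\lambda_1=G$ there. Regarding $G$ as a function of $\lambda_1$, which parametrizes the relevant arc of $\Gamma$, this gives $G(\max_M\lambda_1)\le0\le G(\min_M\lambda_1)$.

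So the sign of the integrand at these points is not at your disposal. Making it definite would require $G$ to vanish at one end of the a priori unknown image arc of $M$ in $\Gamma$ and to keep one sign along it. That is an algebraic fact you do not prove, and no choice of $F$ can produce it. Your maximum-principle fallback runs into the same wall: at the extrema of $\lambda_1$ it only reproduces the inequalities above, which contradict nothing.

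An additional relation is needed, for example from the structure equations $d\omega_{ij}$ (the Gauss equation for mixed sectional curvatures) combined with $\omega|_{D_a}=0$ when $m_a\ge2$. The proposal does not supply one.
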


The closedness assumption in Theorem~\ref{th:ChangCMC} is essential to
Chang's argument, which leads to a genuinely local question. In
dimension three, it is usually formulated as follows:
\begin{conjecture}[Bryant's conjecture]
\label{conj:bryant}
A piece of a minimal hypersurface in $\mathbb S^4(1)$ with constant
scalar curvature is isoparametric.
\end{conjecture}

Chen--Li recently established a higher-dimensional local counterpart
for at most three distinct principal curvatures, thereby extending
Chang's result:
\begin{theorem}[Chen--Li \cite{ChenLiCMC}]\label{th:ChenLiCMC}
Let
$x:M^n\rightarrow\mathbb M^{n+1}(c)$, $n\geq4$, be a piece of an
immersed CMC hypersurface in a space form. If its scalar curvature is
constant and the number $g$ of distinct principal curvatures satisfies
$g\leq3$, then $M^n$ is isoparametric.
\end{theorem}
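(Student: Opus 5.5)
The statement just quoted is Theorem~\ref{th:ChenLiCMC} of Chen--Li, a cited result rather than one proved in this paper. A proof sketch in the spirit of the local methods used later is as follows. The goal is to show that every principal curvature is constant on $M^n$. We work on the open dense set where the multiplicities of the principal curvatures are locally constant, so that there the principal curvatures are smooth and there is a smooth orthonormal frame of principal directions. If every such component has constant principal curvatures, continuity gives the result globally. We split according to the number $g\leq 3$ of distinct principal curvatures.

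\emph{Case $g=1$.} The point is umbilic, so $S=nH^2$ and the hypersurface is totally umbilic.

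\emph{Case $g=2$.} Write the principal curvatures as $\lambda$ with multiplicity $m$ and $\mu$ with multiplicity $n-m$. The conditions
\[
m\lambda+(n-m)\mu=nH,\qquad m\lambda^2+(n-m)\mu^2=S
\]
are two independent polynomial equations in the two unknowns $\lambda,\mu$. Their solution set is finite, so $\lambda$ and $\mu$ are locally constant.

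\emph{Case $g=3$.} Write the principal curvatures as $\lambda_1,\lambda_2,\lambda_3$ with multiplicities $m_1,m_2,m_3$. The two constraints cut the solution set down to a curve, so there is one functional degree of freedom. Along this curve the $\lambda_a$ are functions of a single parameter, and
\[
\sum_a m_a\,d\lambda_a=0,\qquad \sum_a m_a\lambda_a\,d\lambda_a=0.
\]
Hence each $d\lambda_a$ is a fixed multiple of a single $1$-form $\omega$, namely $d\lambda_a = c_a\,\omega$ with $c_a$ proportional to $m_b m_c(\lambda_b-\lambda_c)$, indices cyclic.

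The next step uses the Codazzi equations. When a multiplicity $m_a\geq2$, the Codazzi equations force $\lambda_a$ to be constant along its own eigendistribution. The mixed Codazzi terms, together with the connection forms $\omega_{ij}$ between different eigenspaces, give
\[
e_k(\lambda_a)=(\lambda_b-\lambda_a)\,\omega_{ak}(e_b)\quad\text{and similar relations.}
\]
We then use the Gauss equation. For $e_i,e_j$ in distinct eigenspaces it gives
\[
R_{ijij}=c+\lambda_i\lambda_j.
\]
Computing the same sectional curvature from the connection forms produces second-order identities in the derivatives of the single function generating the $\lambda_a$. This is the generalization of Chang's and Bryant's three-dimensional computation.

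The aim is to derive an overdetermined ODE or algebraic system for the gradient of the generating function along each eigendistribution, and to show that the gradient must vanish. The cleanest route is to prove, in order:
\begin{enumerate}[(i)]
\item the gradient of $\lambda_a$ lies in a single eigendistribution;
\item the resulting system of Gauss equations forces a polynomial identity in $\lambda$ with constant coefficients;
\item this identity can hold on an open set only if $\lambda$ is constant.
\end{enumerate}

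\emph{Main obstacle.} The hard step is the subcase in which all multiplicities are $1$ except possibly one, so that the Codazzi reductions are weakest. In that subcase, differentiating the Gauss equations twice yields large polynomial systems. I would first reduce to the dimension-three computation of Chang by restricting to the sum of eigenspaces that contain the nonzero gradient. The multiplicities then enter only as parameters, and I would handle them by comparing leading coefficients in $\lambda$ to obtain the contradiction.
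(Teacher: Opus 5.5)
The paper does not prove this statement; it quotes it from Chen--Li, so your sketch has to stand on its own. Your cases $g=1$ and $g=2$ are fine. To pass from the dense open set to all of $M$, add that the admissible constant values form a finite set, since a continuous function that is locally constant on a dense open set need not be constant.

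\textbf{The gap: the case $g=3$ is not proved.} This case is the entire content of the theorem, and items (i)--(iii) are a list of goals rather than arguments.
\begin{itemize}
\item Item (i) is not true as stated. For multiplicities $(1,1,n-2)$ your Codazzi reasoning only places $\nabla\lambda_a$ in $D_1\oplus D_2$.
\item The proposed reduction to Chang's three-dimensional computation cannot work. Chang's theorem for $M^3\subset\mathbb S^4$ concerns closed hypersurfaces, while here $M^n$ is only a piece. The local three-dimensional problem is exactly Bryant's conjecture, which the paper lists as open, so there is no ``Chang--Bryant local computation'' to generalize.
\item There is also nothing to restrict to. $D_1\oplus D_2$ is a $2$-plane field, not a CMC hypersurface with constant scalar curvature in a $4$-dimensional space form, and no hypothesis passes to it or to its leaves.
\end{itemize}
Reducing to $n=3$ would also discard precisely the structure that makes $n\geq4$ tractable.

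\textbf{The missing idea: use the eigenspace of multiplicity $\geq2$.}
\begin{itemize}
\item Since $g=3$ and $n\geq4$, some multiplicity $m_a\geq2$. Codazzi gives $e(\lambda_a)=0$ for $e\in D_a$.
\item Every $d\lambda_b=c_b\,\omega$ with $c_a\neq0$, so $\omega$ annihilates $D_a$. Hence \emph{all} principal curvatures are constant along $D_a$.
\item If all three multiplicities are $\geq2$, this already gives $\omega=0$.
\item Otherwise, use the structure equations on planes that combine two orthonormal vectors of $D_a$ with the other eigenspaces.
\end{itemize}

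For example, take multiplicities $(1,m_2,m_3)$ with $m_2,m_3\geq2$, and put $b_{ij}=h_{1ij}$ for $e_i\in D_2$, $e_j\in D_3$. Comparing the $\omega_i\wedge\omega_j$ and $\omega_{i'}\wedge\omega_j$ ($i'\neq i$) components of $d\omega_{ij}$ gives
\[
2b_{ij}^2=e_1(\lambda_2)e_1(\lambda_3)+(c+\lambda_2\lambda_3)(\lambda_1-\lambda_2)(\lambda_1-\lambda_3),\qquad b_{ij}b_{i'j}=0 .
\]
Hence $b\equiv0$, and you obtain a Cartan-type relation. Combined with the fixed ratio $e_1(\lambda_2):e_1(\lambda_3)$, it expresses $(e_1\lambda_2)^2$ as an explicit algebraic function on the conic $\{\sum m_a\lambda_a=nH,\ \sum m_a\lambda_a^2=S\}$. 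Substituting this into the Gauss equation for the plane $(e_1,e_i)$ yields an identity between algebraic functions on that conic. You must then show it cannot hold on any arc unless the gradient vanishes.

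The pattern $(1,1,n-2)$ needs a harder version of the same analysis for the components $h_{12k}$, $e_k\in D_3$. None of these relations exists when $n=3$, which is why the local statement is accessible for $n\geq4$ while Bryant's conjecture is not.
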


A complementary recent result reaches the four-principal-curvature
regime by imposing the Dupin condition. Miyaoka \cite{Miyaoka} proved
that a closed proper Dupin CMC hypersurface with $g=4$ and constant
scalar curvature is isoparametric. Since the proper Dupin hypothesis
includes constancy of the principal-curvature multiplicities and
leafwise constancy of each principal curvature, this result is
structurally different from the spectral assumptions used below.

\subsection{The Willmore Condition and the Role of $f_3$}

For a hypersurface $M^n\subset\mathbb S^{n+1}(1)$, the Willmore
functional is
\[
    \int_M\bigl(S-nH^2\bigr)^{n/2}\,d\mathrm{V}.
\]
Li \cite{Li} derived its Euler-Lagrange equation and classified the
isoparametric Willmore hypersurfaces. When both $H$ and $S$ are
constant, the Euler-Lagrange equation reduces to an algebraic relation.
In intrinsic dimension four, it takes the form
\begin{equation}\label{eq:willmore-intro}
    f_3=2HS-4H^3,
\end{equation}
see \cite{Li,LSS}. Thus, in the four-dimensional CMC setting, the
Willmore condition resolves precisely part of the algebraic freedom
identified above: it not only forces $f_3$ to be constant, but also
determines its value in terms of $H$ and $S$.

The minimal case of \eqref{eq:willmore-intro} is $f_3=0$. This case was
first treated under the additional assumption of nonnegative scalar
curvature by Lusala--Scherfner--Sousa \cite{LSS}. Deng--Gu--Wei
subsequently removed the sign condition and obtained a complete
classification:
\begin{theorem}[Deng--Gu--Wei \cite{DGW}]\label{thm:dgw}
Every closed minimal Willmore hypersurface
$M^4\subset\mathbb S^5(1)$ with constant scalar curvature is
isoparametric. In particular, $S\in\{0,4,12\}$.
\end{theorem}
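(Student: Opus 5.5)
The plan is to follow the scheme of the three-dimensional theory (Chang) and of the minimal case in \cite{LSS}: turn the algebraic Willmore condition $f_3=0$, together with constancy of $S$, into integral identities, and then force the principal curvatures to be constant. Since $H=0$, the quantities $f_1=0$, $f_2=S$ and $f_3=0$ are constant. The characteristic polynomial of the shape operator is therefore
\[
\lambda^4-\tfrac{S}{2}\lambda^2-\sigma_4,
\]
and its only varying coefficient is $\sigma_4=\det(h_{ij})$, which is equivalent to $f_4$. The goal is to show that $f_4$ is constant. Once that holds, all principal curvatures are constant, so $M$ is isoparametric, and the classification of isoparametric minimal hypersurfaces in $\mathbb S^5$ gives $S\in\{0,4,12\}$: totally geodesic, Clifford $\mathbb S^2\times\mathbb S^2$ with $g=2$, and Cartan's hypersurface with $g=3$. (For $g=4$ one would need $n=2m_1+2m_2=4$, i.e.\ $m=1$ and $n=4$, but that family has $S=12$ only in dimension $n=4$ if minimal; I would check this case directly from the explicit values.)

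\emph{Step 1 (local identities).} Differentiate $f_1=0$, $f_2=S$, $f_3=0$ covariantly. In a frame diagonalizing $h$ this gives $\sum_i h_{iik}=0$, $\sum_i\lambda_i h_{iik}=0$ and $\sum_i\lambda_i^2h_{iik}=0$ for each $k$. Since the vectors $(1)$, $(\lambda_i)$ and $(\lambda_i^2)$ are independent wherever at least three principal curvatures are distinct, the diagonal derivatives $h_{iik}$ are determined up to one free parameter. Simons' identity with $|\nabla h|^2=S(S-4)$ (from $\Delta S=0$) then controls $\sum h_{ijk}^2$. Next differentiate a second time and use the Ricci identity to compute $\Delta f_3=0$. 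This yields an identity of the form
\[
0=\tfrac12\Delta f_3=3\sum_{i,j,k}\lambda_i h_{ijk}^2+(4-S)f_3+\dots,
\]
so $\sum\lambda_i h_{ijk}^2$ is expressed algebraically. Similarly, $\frac14\Delta f_4$ equals a quadratic form in the $h_{ijk}$ plus $(4-S)f_4+S\cdot(\text{stuff})$.

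\emph{Step 2 (case analysis on multiplicities).} At points with a repeated principal curvature, the constraints $\sum\lambda_i=0$, $\sum\lambda_i^2=S$ and $\sum\lambda_i^3=0$ leave only finitely many spectra: $(a,a,-a,-a)$, or the forms $(a,a,b,c)$ forced by these equations. On the open set where this happens, Theorem~\ref{th:ChenLiCMC} (for $g\le3$) gives isoparametricity locally. In particular $\sigma_4$ is locally constant there, and analyticity of minimal hypersurfaces propagates this.

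\emph{Step 3 (the generic set, main obstacle).} On the open dense set $U$ where the four principal curvatures are distinct, I would construct a smooth $3$-form $\omega$. Its natural candidate is built from the connection forms $\omega_{ij}$ weighted by functions of $\lambda_i-\lambda_j$, chosen so that $d\omega$ equals a nonnegative quantity such as $\sum_{i\ne j}(\text{curvature terms})\ge c(S)>0$ when $S\notin\{0,4,12\}$. The cut-off is needed because $\omega$ blows up where eigenvalues coalesce. I would integrate over $M\setminus B_\varepsilon(Z)$, where $Z$ is the coalescence set. The difficulty is then to show that the boundary contribution tends to $0$, which requires the weights to vanish sufficiently fast relative to $\operatorname{dist}(\cdot,Z)$; here I expect to need that $Z$ has codimension at least $2$, or Lipschitz estimates on the $\lambda_i$. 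Stokes' theorem then gives $\int_M d\omega=0$, which contradicts positivity. Hence either $U=\emptyset$ or $S$ is isoparametric-valued, and combining this with the earlier identities forces $f_4$ to be constant.
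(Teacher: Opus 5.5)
You should know that the paper does not prove this statement. It imports it from Deng--Gu--Wei as a black box. Its own weighted $3$-form and cut-off arguments (Sections 4--5) only \emph{reduce} the CMC Willmore problem to $H=0$, and at that point this theorem is invoked.

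Measured against that, your proposal has a genuine gap exactly where the content lies, in Step 3. You do not specify the $3$-form, and the natural candidate fails. For $\Phi=\sum_{i<j}(\mu_i+\mu_j)\theta_{ij}$ of Definition~\ref{theta:Phi}, the computation in Section 4 gives, pointwise,
$d\Phi=\bigl(\mathring f_3\sum_k l_k h_{44k}^2+\mathring f_3-2H\mathring S\bigr)\mathrm{vol}$.
This vanishes identically when $H=0$ and $f_3=0$. That is why this mechanism can only force $f_3=0$ or $H=0$ and cannot go further.

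The cut-off of Section 5 collapses for the same reason. Here the spectrum is $(-a,-b,b,a)$, so $\mu_2+\mu_3\equiv0$, the collision $\mu_1=\mu_2$ forces $\mu_3=\mu_4$, and $\mathring f_3\equiv0$. None of the sign conditions used there survive.

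Moreover, even if some form gave $S\in\{0,4,12\}$, you would not be done. At $S=12$ one has $|\nabla h|^2=S(S-4)=96\neq0$, and nothing in Steps 1--2 makes $f_4$ constant. So ``combining with the earlier identities'' is unsupported.

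Step 2 is also misdirected. Since $H=f_3=0$, the characteristic polynomial $\lambda^4-\frac S2\lambda^2+\sigma_4$ is even (note the sign is $+\sigma_4$). So the spectrum is $(a,b,-b,-a)$, and coalescence means either $a=b$ (where $f_4$ attains its minimum $S^2/4$) or $b=0$ (its maximum $S^2/2$).

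The coalescence set is the zero set of an analytic discriminant. Hence it is either all of $M$, in which case $f_4$ is trivially constant, or nowhere dense. Theorem~\ref{th:ChenLiCMC} requires $g\le3$ on an open piece, so it says nothing about isolated coalescence points, and that is where the real difficulty lies.

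What is needed there is a pointwise argument of the kind the paper adapts in Lemma~\ref{lem:two-eigenvalues-22}. At a point with spectrum $(a,a,-a,-a)$, one combines the Ricci commutation $h_{iijj}-h_{jjii}=(\lambda_i-\lambda_j)(1+\lambda_i\lambda_j)$ with the differentiated identity $\sum h_{ijk}h_{ijl}h_{kl}=0$ (which comes from $\Delta f_3=0$) to force $S(S-4)=0$. A separate exclusion of points with spectrum $(a,0,0,-a)$ is also needed. Your plan contains neither.

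A minor correction: $S=12$ is realized by the $g=4$ isoparametric hypersurface with multiplicities $(1,1)$, which is minimal with $S=3n$. It is not a $g=3$ example: Cartan's $g=3$ hypersurfaces need $n=3m$, so there are none in $\mathbb S^5$.
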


In higher dimensions, Ge--Tan--Yan--Zhang \cite{GTYZ} recently obtained
a quantitative second-gap estimate for closed minimal Willmore
hypersurfaces with constant scalar curvature when $n\geq5$, and
established the conjectural bound $S\geq2n$ under additional lower
bounds on $f_4$. Viewed algebraically, the minimal Willmore condition
singles out the distinguished value $f_3=0$, naturally suggesting the
broader problem of replacing this identity by the weaker requirement
that $f_3$ be constant. In dimension four, Tang--Yang obtained rigidity
under nonnegative scalar curvature together with constancy of the
number of distinct principal curvatures \cite{TY}. Cheng--Li removed
the sign condition while retaining the latter assumption
\cite{ChengLiMin}, whereas Spruck--Xiao removed the constancy assumption
on the number of distinct principal curvatures while retaining
nonnegative scalar curvature \cite{SX}. He--Xu--Zhao recently removed
both remaining restrictions:
\begin{theorem}[He--Xu--Zhao \cite{HXZ}]\label{thm:hxz}
Let $M^4\subset\mathbb S^5(1)$ be a closed minimal hypersurface with
constant scalar curvature and constant $f_3$. Then $M^4$ is
isoparametric. In particular, $S\in\{0,4,12\}$.
\end{theorem}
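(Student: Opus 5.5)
We would prove Theorem~\ref{thm:hxz} by showing that the principal curvatures are constant, working with the trace-free structure coming from the Simons-type identities. With $H=0$, $f_2=S$ and $f_3$ constant, the characteristic polynomial of the shape operator in dimension four has the form $t^4-\tfrac S2 t^2-\tfrac{f_3}3 t+\sigma_4$. The only unknown function is therefore $\sigma_4$, equivalently $f_4=\sum\lambda_i^4$. The goal is to prove $f_4$ constant, and then invoke Münzner's theory: constant principal curvatures mean $M$ is isoparametric, and the possible $g\in\{1,2,4\}$ (with $g=3$ excluded for minimal $n=4$ except via the Cartan case, which lives in dimension $3,6,12,24$) give $S\in\{0,4,12\}$.

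\textbf{Step 1: local identities.} Differentiate $f_1,f_2,f_3$ in a local orthonormal frame diagonalizing $h_{ij}$ at a point. This gives $\sum_i h_{iik}=0$, $\sum_i\lambda_i h_{iik}=0$ and $\sum_i\lambda_i^2 h_{iik}=0$ for every $k$. Where the $\lambda_i$ are distinct, these three linear conditions force $h_{iik}$ to be proportional to a Vandermonde cofactor. Differentiate once more to get the second-order relations. Combine them with Simons' identity
\[
\tfrac12\Delta S=|\nabla h|^2+S(4-S)=0,
\]
so that $|\nabla h|^2=S(S-4)$, together with the analogous formula for $\tfrac13\Delta f_3=0$.

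\textbf{Step 2: multiplicity analysis.} Where some principal curvature has multiplicity at least $2$, the constancy of $f_1,f_2,f_3$ and the polynomial form force the remaining curvatures to be roots of a fixed polynomial. Hence on the open set where $g\le3$ the principal curvatures are locally constant, by the Chen--Li local result (Theorem~\ref{th:ChenLiCMC}) or by a direct argument. The essential region is therefore the open dense set $U$ where $g=4$, and the problem is to show that $U$ is empty unless $f_4$ is constant.

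\textbf{Step 3: integral argument (main obstacle).} On $U$, use the Step 1 identities to express $\Delta f_4$, or a suitable weighted quantity, as the exterior derivative of an explicit $3$-form $\Phi$ built from $h_{iik}$, the dual coframe and weights given by Vandermonde products $\prod(\lambda_i-\lambda_j)$. The resulting identity should have the form
\[
d\Phi=Q\,dM,
\]
where $Q\ge0$ is a quadratic expression in the $h_{ijk}$ that vanishes only when $\nabla f_4=0$. The difficulty is that $\Phi$ is singular on the set $M\setminus U$ where principal curvatures coalesce. To handle this, choose the weights so that $\Phi$ stays bounded, and integrate against a cut-off function $\eta_\varepsilon$ supported away from $\{\prod(\lambda_i-\lambda_j)^2<\varepsilon\}$. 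Then show that $\int d\eta_\varepsilon\wedge\Phi\to0$, using that the discriminant is a real-analytic function of $f_4$. Since $M$ is closed, Stokes' theorem gives $\int_M Q=0$, so $f_4$ is constant.

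Establishing both the sign of $Q$ and the boundary estimate, without any sign assumption on $S$, is where I expect the real work. I would first try weights equal to powers of the discriminant chosen to cancel the denominators from the Vandermonde solution for $h_{iik}$.
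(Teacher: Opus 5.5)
The paper does not prove this statement; it quotes it from \cite{HXZ}. I therefore judge your plan on its own, and against the machinery of Sections~3--5, which is what one would actually use. There is a genuine gap at Step~3, and that is where the whole content of the theorem lies.

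You postulate a $3$-form with $d\Phi=Q\,dM$, where $Q\ge0$ is quadratic in $h_{ijk}$ and vanishes only when $\nabla f_4=0$. You never construct it. The form of this kind that actually exists has a different structure. This is the Spruck--Xiao form, i.e.\ the $\Phi=\sum_{i<j}(\lambda_i+\lambda_j)\theta_{ij}$ of Section~4 with $H=0$. Wherever $g=4$ it satisfies
\[
d\Phi=f_3\Bigl(\sum_{k=1}^4 l_k\,h_{44k}^2+1\Bigr)\mathrm{vol},\qquad l_k\ge0 .
\]
The constant term comes from the curvature term $\tilde R=\sum_{i<j}(\lambda_i+\lambda_j)R_{ijij}=-f_3$, not from a quadratic form in $\nabla h$.

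Integrating this identity produces a contradiction when $f_3\ne0$, so at best it gives $f_3=0$; it never gives $\nabla f_4=0$. Once $f_3=0$, the form is closed and carries no information. That residual case (minimal with $f_3=0$, i.e.\ minimal Willmore) is exactly the theorem of Deng--Gu--Wei \cite{DGW}, which needs a separate and substantial argument. Your plan has no mechanism for it. The workable architecture is a case split, not a single identity proving $f_4$ constant:
\begin{itemize}
\item for $f_3\ne0$, the form together with cut-offs excludes points with $g\ge3$, leaving only the extremal $(3,1)$ Clifford case;
\item $f_3=0$ is \cite{DGW}.
\end{itemize}

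Your treatment of the coalescence set also fails as stated.
\begin{itemize}
\item The coefficients of $\Phi$ are $h_{ijk}/(\lambda_i-\lambda_j)$. Near a collision $\lambda_1=\lambda_2$, the boundary term $\eta_\varepsilon'\,dw\wedge\Phi$ therefore contains terms of order $\varepsilon^{-1}(\lambda_2-\lambda_1)^{-2}h_{44k}^2$ (the $B_{u,1},B_{u,2}$ of Section~5). Nothing forces these to tend to zero, and real-analyticity of the discriminant supplies no such estimate.
\item In \cite{SX} and Section~5 the boundary term is not made small. Instead, after orienting so that $f_3>0$, the most singular coefficients acquire a definite sign from the sign of $\lambda_1+\lambda_2$ (resp.\ $\lambda_2+\lambda_3$) at a collision (Lemma~\ref{lemma:3root}). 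This yields an inequality contradicting $\int\eta_\varepsilon\,d\Phi>0$, again only because $f_3\neq0$.
\item Replacing the weights $\lambda_i+\lambda_j$ by powers of the discriminant, as you suggest, would destroy the cancellations on which the sign of $d\Phi$ rests. An example is the vanishing of the coefficients of the crossing terms $h_{ijk}^2$ with $i,j,k$ distinct.
\item The cut-off scheme needs $g\ge3$ everywhere, so points with $g=2$ must be handled separately, and your Step~2 does not address them. The $(3,1)$ pattern follows from the equality case of Lemma~\ref{lemma:sqrt3}. The $(2,2)$ pattern forces $f_3=0$ and needs the local computation of \cite{DGW}.
\end{itemize}
A minor point: $\{g\le3\}$ is closed, not open. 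On its interior, local constancy is immediate, because the discriminant is a cubic in $\sigma_4$ with constant coefficients and vanishes there. Chen--Li is not needed for that step.
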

Other recent rigidity results include the work of Cui \cite{Cui},
Ge--Liu--Luo--Yan \cite{GLLY}, and Deng--Kou \cite{DK} on closed minimal hypersurfaces in
$\mathbb S^5(1)$ with constant scalar and Gauss-Kronecker curvatures, as well as the work of Tao \cite{T} on
$M^5\subset\mathbb S^6(1)$ with constant $S,f_3,f_4$ under suitable
assumptions on the principal curvatures.

\vspace{2.7mm}

The present paper develops a CMC counterpart of
Theorem~\ref{thm:hxz} within the class naturally selected by the
Willmore condition \eqref{eq:willmore-intro}. Whereas
Theorem~\ref{thm:hxz} allows an arbitrary constant value of $f_3$ in
the minimal setting, here $H$ may be nonzero and $f_3$ is determined
by $H$ and $S$. We also isolate a two-principal-curvature branch under
the weaker assumptions that $H$, $S$, and $f_3$ are constant.

\subsection{Main results}
We first establish a rigidity result that does not require the Willmore
condition.

\begin{theorem}\label{th:2root}
    Let $M^4 \hookrightarrow \mathbb{S}^5(1)$ be a closed hypersurface with constant $H, S$, and $f_3$. If there exists a point $P \in M^4$ with exactly two distinct principal curvatures, then $M^4$ is isoparametric. 

    More precisely, $M^4$ must belong to the families of Clifford torus $\mathbb{S}^1(r)\times \mathbb{S}^3(\sqrt{1-r^2})$ or $\mathbb{S}^2(r)\times \mathbb{S}^2(\sqrt{1-r^2})$ with $0<r<1$.
\end{theorem}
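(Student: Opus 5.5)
Since $H$, $S$, $f_3$ are constant, the first three power sums of the principal curvatures $\lambda_1,\dots,\lambda_4$ are fixed. Hence the elementary symmetric functions $\sigma_1,\sigma_2,\sigma_3$ are constant on $M^4$, and the characteristic polynomial at every point is
\[
    p_\sigma(t)=t^4-\sigma_1t^3+\sigma_2t^2-\sigma_3t+\sigma,
\]
where only $\sigma=\sigma_4$ (the Gauss--Kronecker curvature) varies along $M$. The plan has three steps: an algebraic reduction, a local rigidity step on an open set, and a global closing step.

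\textbf{Algebraic reduction.} At the point $P$ the spectrum has exactly two distinct values, $\{a^{(m)},b^{(4-m)}\}$ with $m\in\{1,2\}$ after relabeling. For $m=2$ the values $a,b$ are double roots of $p_{\sigma(P)}$. For $m=1$, $b$ is a triple root, so it is a root of $p'$ and $p''$, and $p''$ does not depend on $\sigma$. In either case the multiple root is a critical point of the fixed cubic $p_0'(t)=4t^3-3\sigma_1t^2+2\sigma_2t-\sigma_3$ (and of $p''$ in the triple case). The key observation is that the roots of $p_\sigma$ are determined by where the horizontal line at height $-\sigma$ meets the graph of $q(t)=t^4-\sigma_1t^3+\sigma_2t^2-\sigma_3t$. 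If two double roots occur, the configuration is rigid: $q$ has two equal local minima and $\sigma(P)$ is the global minimum value of $-q$'s negative. If a triple root occurs, the corresponding critical point is degenerate.

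I will then classify the nearby spectra as $\sigma$ varies. In the $\{a^{(2)},b^{(2)}\}$ case, $\sigma(P)$ is an extremal value, namely the maximum of $\sigma$ for which $p_\sigma$ has four real roots, since $\sigma=-\min q$. Because the principal curvatures are real everywhere, $\sigma\le\sigma(P)$ globally. In the $\{a,b^{(3)}\}$ case, the degenerate critical point of $q$ is an inflection-type flat point, and any perturbation of $\sigma$ on one side produces nonreal roots. So again $\sigma$ attains a one-sided extremum at $P$, and I expect perturbation on the other side to be impossible as well, or to yield spectra of type $\{a,b,c^{(2)}\}$ or three/four simple roots.

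\textbf{Local rigidity.} Consider the set $U=\{\sigma=\sigma(P)\}$, which is closed. I want to show it is open. Where $\sigma=\sigma(P)$, the spectrum is constant, so $M$ is locally isoparametric there. Near $P$ the alternative is that $\sigma<\sigma(P)$ somewhere, with spectrum splitting into $g=3$ or $g=4$ pieces. The cleanest approach is to use the maximum at $P$ together with a Simons-type identity, or a computation of $\Delta\sigma$ via the Codazzi equations. At a two-root point with constant $H,S$, the covariant derivatives $h_{ijk}$ are constrained: the Codazzi equations plus constancy of $f_1,f_2,f_3$ force $h_{ijk}=0$ in many index blocks. I will try to prove that $\Delta\sigma\ge0$, or $\sigma\equiv\sigma(P)$, near $P$ by a strong maximum principle argument applied to $\sigma$, or to the discriminant-type function $\prod(\lambda_i-\lambda_j)^2$, which is smooth because it is a polynomial in $\sigma$.

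\textbf{Closing the argument.} Once $U$ is open and closed, $U=M$ by connectedness. Then $M$ has two constant principal curvatures with multiplicities $(1,3)$ or $(2,2)$, and by Cartan's classification (or that of Chern--do Carmo--Kobayashi and Lawson for two principal curvatures) it is a Clifford torus $\mathbb S^1(r)\times\mathbb S^3(\sqrt{1-r^2})$ or $\mathbb S^2(r)\times\mathbb S^2(\sqrt{1-r^2})$.

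\textbf{Main obstacle.} The hardest step is the openness of $U$, because $\sigma$ may be only Lipschitz-degenerate where roots coalesce. If the maximum principle fails, the fallback is global. On the complement of $U$, the number of distinct principal curvatures is $3$ or $4$. Where it is $3$, the Chen--Li result, Theorem~\ref{th:ChenLiCMC}, gives local isoparametricity, so $\sigma$ is locally constant, contradicting continuity at $\partial U$ unless the value matches $\sigma(P)$. Where it is $4$, I would use the paper's weighted $3$-form with a cut-off near the coalescence set, integrated by Stokes, to force $\sigma$ to be constant.
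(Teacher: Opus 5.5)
There is a genuine gap, and it sits in the $(2,2)$ case.

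\textbf{What works.} Your algebraic reduction is correct. In the $(1,3)$ case it finishes the proof more decisively than you state. Since $p_{\sigma(P)}'(t)=(t-b)^2(4t-b-3a)$, the function $q$ is unimodal with a flat inflection at $b$. So moving $\sigma$ to \emph{either} side of $\sigma(P)$ creates nonreal roots, and $\sigma\equiv\sigma(P)$ on $M$. This is equivalent to the paper's argument that $\mathring f_3=\mathring S^{3/2}/\sqrt3$ is extremal and attained only by $(-1,-1,-1,3)$.

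\textbf{The $(2,2)$ case.} This case carries all the geometric content. Write $\lambda_{1,2}=H+\mu$, $\lambda_{3,4}=H-\mu$ at $P$. Then $\mathring f_3\equiv0$, so the trace-free spectrum is everywhere $\{\pm\alpha,\pm\beta\}$ with $\alpha^2+\beta^2=\mathring S/2$. Every value $\mathring\sigma_4=\alpha^2\beta^2\in[0,\mathring S^2/16]$ is compatible with a real spectrum, with four distinct curvatures in the interior. The algebra therefore only shows that $P$ is a global maximum of $\sigma$, hence $\nabla\sigma(P)=0$. Your ``local rigidity'' step, which has to do all the work, is left as a hope.

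\textbf{Why the maximum principle does not help.} From the trace-free formula for $\Delta\mathring f_4$ and $\mathring f_4=\mathring S^2/2-4\mathring\sigma_4$, one gets $\Delta\sigma(P)=-2\mu^2\sum_k\sum h_{ijk}^2\le0$, with the inner sum over $i,j$ in the same eigenspace. So any inequality $\Delta\sigma+b\cdot\nabla\sigma+c\,(\sigma-\sigma(P))\ge0$ near $P$ already presupposes that these derivatives vanish at $P$. Away from $P$, the terms $2\mathring{\mathscr A}+\mathring{\mathscr B}$ involve all $h_{ijk}$, while $\nabla_k\sigma=-\sum_i\mu_i^3h_{iik}$ sees only diagonal ones. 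There is no evident way to control them.

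Nor do Codazzi and the constancy of $f_1,f_2,f_3$ kill blocks of $h_{ijk}$ at $P$. There the $f_3$ relation is $\mu^2\sum_i h_{iik}=0$, which is redundant. Only $h_{11k}+h_{22k}=h_{33k}+h_{44k}=0$ survives.

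\textbf{Why the fallback fails.} The $g=3$ locus is $\{\beta=0\}$, a level set at the other end of the interval. It generally has no interior (Chen--Li needs an open piece with $g\le3$), and it is disjoint from $\overline U$, so it never meets $\partial U$.

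The cut-off analysis of Section~5 assumes three things: only one endpoint collision occurs, the other gaps stay bounded away from zero near the collision set, and $\mathcal W_{23}<0$ at middle collisions. All three fail here. You have $\mu_2-\mu_1=\mu_4-\mu_3$ identically, so the $(2,2)$ points are simultaneous endpoint collisions. On $\{\beta=0\}$ you have $\mathcal W_{23}=0$. Moreover, with $\mathring f_3=0$ the Section~4 identity reduces to $d\Phi=-2H\mathring S\,\mathrm{vol}$. At best that yields $H=0$, not constancy of $\sigma$.

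\textbf{The missing idea.} By Simons' identity, $\sum_{i,j,k}h_{ijk}^2=\mathring S(\mathring S-4H^2-4)$ is a global constant. So it suffices to show it vanishes at the single point $P$, and no open--closed argument is needed. The paper does this pointwise at $P$ by adapting the second-derivative computation of Deng--Gu--Wei.
\begin{itemize}
\item $\Delta\mathring f_3=0$ together with $\nabla\mathring f_4(P)=0$ gives $\bigl(\sum h_{ijk}h_{ijl}\mathring h_{kl}\bigr)_m(P)=0$. The vanishing of $\nabla\mathring f_4(P)$ is exactly what your extremality observation provides.
\item The Ricci identity gives $h_{iijj}-h_{jjii}=2\mu(1+H^2-\mu^2)=-\sum h_{ijk}^2/(8\mu)$ for $i\in\{1,2\}$, $j\in\{3,4\}$.
\end{itemize}
The two resulting tensor alternatives are consistent only if $\sum h_{ijk}^2=0$, whence $h_{ijk}\equiv0$.
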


Imposing the Willmore relation determines the admissible radii in
Theorem~\ref{th:2root}, yielding the following corollary:

\begin{corollary}\label{cor}
    Let $M^4\hookrightarrow \mathbb{S}^5(1)$ be a closed CMC Willmore hypersurface with constant scalar curvature.
    If $M^4$ has exactly two distinct principal curvatures at some point,
    then $M^4$ is isoparametric and must be one of the following Clifford tori:
    $$ \mathbb{S}^2\left(\frac{\sqrt{2}}{2}\right) \times \mathbb{S}^2\left(\frac{\sqrt{2}}{2}\right) \quad \text{or} \quad \mathbb{S}^1\left(\frac{\sqrt{3}}{2}\right) \times \mathbb{S}^3\left(\frac{1}{2}\right). $$
\end{corollary}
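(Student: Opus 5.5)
The plan is to derive the corollary directly from Theorem~\ref{th:2root} and then use the Willmore relation \eqref{eq:willmore-intro} to pin down the radius. A closed CMC Willmore hypersurface with constant scalar curvature has constant $H$ and $S$. By \eqref{eq:willmore-intro}, $f_3=2HS-4H^3$ is then also constant. So Theorem~\ref{th:2root} applies, and $M^4$ is isoparametric and congruent to $\mathbb S^k(r)\times\mathbb S^{4-k}(\sqrt{1-r^2})$ with $k\in\{1,2\}$; the case $k=3$ is the case $k=1$ with the factors interchanged. It remains to substitute the constant principal curvatures of these tori into \eqref{eq:willmore-intro} and solve for $r$.

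First I will rewrite \eqref{eq:willmore-intro} in trace-free form, which makes the computation short. Set $b_i=\lambda_i-H$. Expanding the power sums with $\sum_i\lambda_i=4H$ gives
\[
\sum_i b_i^2=S-4H^2,\qquad \sum_i b_i^3=f_3-3HS+8H^3 .
\]
Hence \eqref{eq:willmore-intro} is equivalent to
\[
\sum_i b_i^3+H\sum_i b_i^2=0 .
\]
For the torus $\mathbb S^k(r)\times\mathbb S^{4-k}(\sqrt{1-r^2})$ the principal curvatures are $\lambda=\sqrt{1-r^2}/r$ with multiplicity $k$ and $\mu=-1/\lambda$ with multiplicity $4-k$. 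Put $d=(\lambda-\mu)/4\neq0$.

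\emph{Case $k=2$.} Here $b_i=\pm 2d$, each sign occurring twice, so $\sum_i b_i^3=0$. The relation becomes $16Hd^2=0$, hence $H=0$. Then $\lambda=-\mu=1/\lambda$, so $\lambda=1$ and $r=1/\sqrt2$. This gives the minimal Clifford torus $\mathbb S^2(\sqrt2/2)\times\mathbb S^2(\sqrt2/2)$.

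\emph{Case $k=1$.} Here $b_1=3d$ and the other three $b_i$ equal $-d$. Thus $\sum_i b_i^3=24d^3$ and $\sum_i b_i^2=12d^2$, and the relation becomes $H=-2d$. Substituting $H=(\lambda+3\mu)/4$ and $d=(\lambda-\mu)/4$ gives $3\lambda=-\mu=1/\lambda$, so $\lambda^2=1/3$. Therefore $(1-r^2)/r^2=1/3$, i.e. $r=\sqrt3/2$, which is $\mathbb S^1(\sqrt3/2)\times\mathbb S^3(1/2)$.

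The only step needing care is the orientation. Reversing the unit normal changes the signs of $H$ and $f_3$ together, which multiplies \eqref{eq:willmore-intro} by $-1$, so the relation is orientation-independent. The case $k=3$ therefore produces the same torus as $k=1$ and no further solutions. There is no substantive obstacle here: all the depth lies in Theorem~\ref{th:2root}, and the corollary reduces to this short algebraic check.
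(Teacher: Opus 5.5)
Your proof is correct and follows the same route as the paper. The Willmore relation makes $f_3$ constant, so Theorem~\ref{th:2root} applies, and the trace-free form of the Willmore identity (your $\sum_i b_i^3+H\sum_i b_i^2=0$, i.e.\ \eqref{eq:f30willmore}) then singles out $r=\sqrt2/2$ and $r=\sqrt3/2$, as stated in the remarks after Lemmas~\ref{lem:two-eigenvalues-22} and~\ref{lem:31}. The paper only asserts those radii, and your explicit computation of them checks out.
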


The remaining principal-curvature multiplicity regimes require the
Willmore condition. Combining their analysis with
Corollary~\ref{cor}, we obtain the following classification:

\begin{theorem}\label{th:willmore}
    Let $M^4\hookrightarrow \mathbb{S}^5(1)$ be a closed CMC Willmore hypersurface with constant scalar curvature. Then $M^4$ is isoparametric.
    
    In fact, $M^4$ is congruent to a totally umbilic geodesic sphere $\mathbb{S}^4(r)$ with $0<r\le1$, the minimal Clifford torus $\mathbb{S}^2\left(\frac{\sqrt{2}}{2}\right) \times \mathbb{S}^2\left(\frac{\sqrt{2}}{2}\right)$, the nonminimal Clifford torus $
    \mathbb S^1\left(\frac{\sqrt3}{2}\right)
    \times
    \mathbb S^3\left(\frac{1}{2}\right)$ or a Cartan minimal hypersurface.
\end{theorem}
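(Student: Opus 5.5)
The plan is to work with the trace-free shape operator $A=h-H\,\mathrm{Id}$ with eigenvalues $\mu_i=\lambda_i-H$. Then $\sum\mu_i=0$, $\sum\mu_i^2=S-4H^2$, and $\sum\mu_i^3=f_3-3HS+2\cdot 4H^3$ are all constant. The Willmore relation $f_3=2HS-4H^3$ becomes
\[
\sum_i\mu_i^3=-H\,(S-4H^2)=-H\sum_i\mu_i^2 .
\]
So at every point the $\mu_i$ satisfy three fixed symmetric relations. Only one free parameter remains, which we take to be $f_4$ (equivalently $\sum\mu_i^4$). If $S=4H^2$ we get the totally umbilic spheres, so we assume $S>4H^2$.

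First, the multiplicity pattern. We split by the number $g(P)$ of distinct principal curvatures at points $P$. If $g=2$ somewhere, Corollary~\ref{cor} gives the two Clifford tori. We then show that $g=1$ cannot occur once $S>4H^2$. In the remaining case $g\ge 3$ everywhere, the Willmore relation should rule out multiplicity patterns $(2,1,1)$ except at a distinguished value of the constants. Concretely, substitute $\mu=(a,a,b,c)$ into the two constraints and solve. A finite, possibly empty, set of $f_4$-values is allowed, so such points lie on the level sets $\{f_4=\text{const}\}$. On the open dense set where $g=3$ with constant multiplicities, Theorem~\ref{th:ChenLiCMC} gives isoparametric pieces. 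On the set where $g=4$, the principal curvatures are smooth functions of the single parameter $f_4$.

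Second, the $g=4$ regime. Here I would adapt the He--Xu--Zhao / Deng--Gu--Wei method. Differentiate the constancy of $f_1,f_2,f_3$:
\[
\sum_i h_{iik}=0,\qquad \sum_i\lambda_i h_{iik}=0,\qquad \sum_i\lambda_i^2h_{iik}=0 .
\]
By Vandermonde, $h_{iik}$ is determined by one function per direction $k$, namely $\nabla f_4$. The Codazzi equations together with the Gauss equation $R_{ijij}=1+\lambda_i\lambda_j$ and the second-derivative identities then give local tensor identities. Using the Simons-type formula for $\Delta f_4$ with constant $S$ and $f_3$, we aim for the inequality
\[
\tfrac12\Delta S=0=|\nabla h|^2+\text{(algebraic in }\lambda),
\]
and for a differential identity relating $|\nabla h|^2$ and $\nabla f_4$. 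Next, construct a weighted $3$-form $\Phi=\phi(\lambda)\,\omega_{234}$-type, built from the connection forms $\omega_{ij}$ weighted by functions of the $\lambda_i$. Its exterior derivative $d\Phi$ should equal a nonnegative (or nonpositive) multiple of the volume form, with equality exactly when $\nabla h\equiv0$. Integrate by Stokes over $M$. To do this, cut off near the set $\{g\le3\}$, where $\Phi$ is singular because the eigenframes degenerate, using functions $\eta_\varepsilon$ of the discriminant $\prod_{i<j}(\lambda_i-\lambda_j)^2$. We must show that the boundary term $\int d\eta_\varepsilon\wedge\Phi$ tends to zero. This should follow from a growth estimate on $\Phi$ near coalescence, combined with the codimension of the coalescence set inside the level sets of $f_4$.

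The conclusion is that $\nabla h\equiv0$. Thus the hypersurface is isoparametric, and it is either the Cartan hypersurface ($g=3$, minimal) or one of the tori. Any isoparametric example with $g=4$ in $\mathbb S^5$ would have to satisfy the Willmore relation, and we exclude this by Li's classification of isoparametric Willmore hypersurfaces.

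The main obstacle is the cut-off step. We need that the weight in $\Phi$ vanishes fast enough at coalescing points relative to the degeneration of the frame, and the Willmore relation is presumably what supplies the required sign and vanishing order. I would first try the weight equal to the Vandermonde product $\prod_{i<j}(\lambda_i-\lambda_j)$ and check the order of vanishing at $(2,1,1)$ points.
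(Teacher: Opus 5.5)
There is a genuine gap at the core of your plan. In the $g=4$ regime you never specify the $3$-form or compute $d\Phi$. The target you set, a sign-definite $d\Phi$ vanishing exactly when $\nabla h\equiv0$, would in particular reprove Theorem~\ref{thm:dgw} in the $g=4$ case, and would have to work with $H\neq0$ as well; nothing in the proposal indicates how. The missing idea is that the Willmore relation is used only to force \emph{minimality}, after which the minimal theorem is quoted. Take the Spruck--Xiao form $\Phi=\sum_{i<j}(\mu_i+\mu_j)\theta_{ij}$. Its cross terms $h_{ijk}^2$ ($i,j,k$ distinct) cancel. The coefficient of $h_{44k}^2$ is $\mathring f_3\,l_k$ with $l_k\ge0$, because $4\mu_k^2\le3\mathring S$. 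The curvature term is $\sum_{i<j}(\mu_i+\mu_j)R_{ijij}=2H\mathring S-\mathring f_3$. Inserting $\mathring f_3=-H\mathring S$ gives
\[
d\Phi=-H\mathring S\Bigl(\sum_{k=1}^4 l_k h_{44k}^2+3\Bigr)\mathrm{vol},
\]
so Stokes forces $H=0$, and Theorem~\ref{thm:dgw} then yields the Cartan hypersurface. For $H=0$ this $\Phi$ is closed, so no form of this type gives $\nabla h\equiv0$ directly, and you offer no substitute.

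Your treatment of $g=3$ points also fails as written. When $g\ge3$ everywhere, $\{g=3\}$ is the closed zero set of the discriminant, not an open dense set. Theorem~\ref{th:ChenLiCMC} says nothing about $g=3$ points in the closure of $\{g=4\}$, which are the real issue. Growth estimates alone will not make the cut-off boundary term vanish either. With $u=(\mu_2-\mu_1)^2$, $v=(\mu_3-\mu_2)^2$ and a smoothed $\min\{u,v\}$, the parts of $du\wedge\Phi$ and $dv\wedge\Phi$ with coefficients of order $1/u$ and $1/v$, multiplied by $\eta_\epsilon'\sim1/\epsilon$, need not be small. The paper disposes of them by a \emph{sign}. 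Choose the orientation with $\mathring f_3>0$, so that $H<0$ and the interior integrand is positive. The ordering then gives $\mu_1+\mu_2<0$ near $\{u=0\}$, and Lemma~\ref{lemma:3root}(iii) gives $\mu_2+\mu_3<0$ near $\{v=0\}$. Hence those singular parts enter $-\int\eta_\epsilon'\,dw\wedge\Phi$ with the sign opposite to the interior integral. Only the bounded-coefficient remainder is shown to be $O(\mathrm{vol}(U_\epsilon\sqcup V_\epsilon))$, using $|h_{44k}|\lesssim\sqrt\epsilon$ there. All of this relies on the specific weights $\mu_i+\mu_j$, not on a Vandermonde weight.

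Finally, your endgame misidentifies the examples. There is no isoparametric hypersurface with $g=3$ in $\mathbb S^5$. The Cartan minimal hypersurface in the statement has $g=4$ and $S=12$, and it is Willmore. So ``excluding $g=4$ isoparametric examples via Li'' would discard a case the theorem lists; what is needed is only that such an example be minimal.
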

No sign condition on the scalar curvature or constancy assumption on
the number of distinct principal curvatures is imposed. The proof
combines the two-principal-curvature rigidity result above with a
trace-free reformulation of the weighted differential 3-form
and a cut-off argument adapted from \cite{SX} for the three- and
four-principal-curvature regimes. These arguments reduce the remaining
cases to the known minimal classification. Thus, part of the rigidity
mechanism in the minimal theory persists in a natural nonminimal
Willmore class, while the general CMC problem with constant $f_3$
remains open.

\vspace{2.5mm}

The paper is organized as follows. In Section 2, we
collect the basic formulas for CMC hypersurfaces in the sphere and
develop their trace-free reformulations. In Section 3, we prove
Theorem~\ref{th:2root} by analyzing the possible multiplicity patterns
at a point with exactly two distinct principal curvatures. In
Section 4, under the Willmore condition, we treat the case of four
distinct principal curvatures everywhere using a weighted differential
$3$-form and Stokes' theorem. In Section 5, we analyze the
coalescence of principal curvatures through a cut-off argument, exclude
the three-principal-curvature case, and complete the proof of
Theorem~\ref{th:willmore}.

\section{Basic Formulas}
In this section, we recall some basic formulas for closed CMC hypersurfaces in $\mathbb{S}^{n+1}(1)$, particularly $n=4$, which can be found in \cite{Cui, LSS, PT1, PT3}.

Let $M^n$ be an $n$-dimensional closed hypersurface with constant mean curvature $H$ in $\mathbb{S}^{n+1}(1)$. We choose a local orthonormal frame field $\{e_1, e_2, \cdots, e_{n+1}\}$ such that $\{e_1, e_2, \cdots, e_{n}\}$ is tangent to $M^n$. Let $h_{ij}$ and $H$ denote the components of the second fundamental form and the mean curvature respectively. Then
$$H=\frac{1}{n}\sum_{i}h_{ii}, \quad S=\sum_{i,j}h_{ij}^2, \quad f_3=\sum_{i,j,k}h_{ij}h_{jk}h_{ki}, \quad f_4=\sum_{i,j,k,l}h_{ij}h_{jk}h_{kl}h_{li}.$$

At any fixed point $p\in M$, we can choose an orthonormal frame such that $h_{ij}=\lambda_i\delta_{ij}$ for all $i,j$. Then, at this point $P$, we have
$$H=\frac{1}{n}\sum_{i=1}^n \lambda_i, \quad S=\sum_{i=1}^n \lambda_i^2, \quad f_3=\sum_{i=1}^n \lambda_i^3, \quad f_4=\sum_{i=1}^n \lambda_i^4.$$

The Euler-Lagrange equation for the Willmore functional gives the
following characterization, see \cite{LSS}: Let $M^n \subset \mathbb{S}^{n+1}(1)$ be an $n$-dimensional compact hypersurface
with constant mean curvature and constant scalar curvature in an $(n+1)$-dimensional
unit sphere $\mathbb{S}^{n+1}(1)$. Then $M^n$ is a Willmore hypersurface if and only if
\begin{equation}\label{eq:Willmore}
    f_3 = 2HS-nH^3.
\end{equation}

\vspace{2mm}

We next introduce the trace-free second fundamental form
\[
    \mathring{h} :=h-Hg,\ \ 
    \mu_i=\lambda_i-H.
\]
Notice that in a local orthonormal frame $\{e_i\}_{i=1}^4$, we have 
\[\mathring{h}_{ij} = h_{ij} - H\delta_{ij},\]
where $\delta_{ij}=\begin{cases}1\ \ i=j\\0\ \ i\ne j 
\end{cases}$,
and $\sum_i\mu_i=0$. Correspondingly, we use the notation
\[ \mathring{S} =\sum_{i,j}\mathring{h}_{ij}^2 = \sum_{i=1}^{4}\mu_i^2,\ \ \mathring{f}_k = \sum_{i=1}^{4}\mu_i^k.
\]
Since $H$ is constant,
\[\mathring h_{ijk} = h_{ijk},\ \ \mathring h_{ijkl} = h_{ijkl}
\]
The full and trace-free power sums are related by
\begin{align}
S  &=  \mathring S+4H^2,\\[1.5mm]
    f_3
      &=\mathring f_3+3H\mathring S+4H^3,\label{eq:f3-shift}\\[1.5mm]
    f_4
      &=\mathring f_4+4H\mathring f_3
        +6H^2\mathring S+4H^4,\label{eq:f4-shift}\\[1.5mm]
    f_5
      &=\mathring f_5+5H\mathring f_4
        +10H^2\mathring f_3+10H^3\mathring S+4H^5.
        \label{eq:f5-shift}
\end{align}
In particular, \eqref{eq:Willmore} and \eqref{eq:f3-shift} imply
\begin{equation}\label{eq:f30willmore}
   \mathring{f}_3=(2HS-4H^3) - 3HS+8H^3 = -H\mathring{S}.
\end{equation}

The Simons identity gives \begin{equation}\label{eq:dS}
     \sum_{i,j,k} \mathring{h}_{ijk} ^2 = \mathring{S}(\mathring{S} - 4H^2 - 4) - 4H\mathring{f}_3.
\end{equation}
Define $
    \mathscr{A}      =\sum_{i,j,k=1}^{4}\lambda_i^2 h_{ijk}^2,\ 
    \mathscr{B} =\sum_{i,j,k=1}^{4}\lambda_i\lambda_j h_{ijk}^2$ and their trace-free notion \[
\mathring{\mathscr A} =\sum_{i,j,k=1}^{4}\mu_i^2\mathring h_{ijk}^2,\ \  \mathring{\mathscr B} =\sum_{i,j,k=1}^{4}\mu_i\mu_j \mathring h_{ijk}^2,\] the CMC Simons identity for the higher power sums in
arbitrary intrinsic dimension $n$ is
\begin{equation}\label{eq:df3-general}
    \Delta f_3
      =3(n-S)f_3+3nHf_4-3nHS
       +6\sum_{i,j,k=1}^{n}h_{ijk}^2\lambda_i.
\end{equation} 
In dimension $4$, it gives the equivalent trace-free
version
\begin{equation}\label{eq:df3}
    0=\Delta\mathring f_3
      =3(4-\mathring S+4H^2)\mathring f_3
       +12H\mathring f_4-3H\mathring S^2
       +6\sum_{i,j,k=1}^{4}\mathring h_{ijk}^2\mu_i.
\end{equation}

Similarly, the
dimension-$4$ formula
\begin{equation}\label{eq:df4}
    \frac14\Delta f_4
      =(4-S)f_4+4H(f_5-f_3)+2\mathscr A+\mathscr B,
\end{equation}
and $\mathring f_5=\dfrac{5}{6} \mathring S\mathring f_3$, by virtue of Newton's identities, it implies the trace-free version
\begin{align}\label{eq:df4-tracefree}
    \frac14\Delta\mathring f_4
      &=(4-\mathring S+4H^2)\mathring f_4
       +2\mathring{\mathscr A}+\mathring{\mathscr B}
       -H\bigl(\mathring S\mathring f_3-4\mathring f_5\bigr) \nonumber\\
       &= (4-\mathring S+4H^2)\mathring f_4
       +\frac73H\mathring S\mathring f_3
       +2\mathring{\mathscr A}+\mathring{\mathscr B}.
\end{align}

\section{A Point with Two Distinct Principal Curvatures}
In this section, we prove Theorem~\ref{th:2root}. Assume that $M^4\hookrightarrow \mathbb{S}^5(1)$ has constant $H,S,f_3$ and that, at a point $P\in M^4$, there are two distinct principal curvatures. All computations below are performed at $P$.

The proof of Theorem~\ref{th:2root} is divided into two cases according to the multiplicities of the principal curvatures.

\subsubsection*{Case I. $\lambda_1=\lambda_2 > \lambda_3=\lambda_4$}
Motivated by Theorem 3.1 in \cite{DGW},
we first establish the following lemma.
\begin{lemma}\label{lem:two-eigenvalues-22}
Let $M^4 \hookrightarrow \mathbb S^5(1)$ be a closed CMC hypersurface with
constant $S$ and $f_3$. Suppose that, at the point
$P\in M$,
\[
 \lambda_1=\lambda_2=H+\mu,\qquad
 \lambda_3=\lambda_4=H-\mu,\qquad \mu>0,
\]
then $M^4$ is isoparametric and is a Clifford torus $\mathbb{S}^2 (r) \times \mathbb{S}^2(\sqrt{1-r^2})$, $0<r<1$.
\end{lemma}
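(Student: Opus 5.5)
The plan is to turn the pointwise hypothesis at $P$ into a global extremal condition on $\mathring f_4$, and then close the argument with a maximum principle. First, at $P$ we have $\mu_i=(\mu,\mu,-\mu,-\mu)$, so $\mathring f_3(P)=0$. Since $H,S,f_3$ are constant, \eqref{eq:f3-shift} shows that $\mathring S$ and $\mathring f_3$ are constant. Hence $\mathring f_3\equiv0$ and $\mathring S\equiv4\mu^2$ on all of $M$.

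With $\sum\mu_i=0$, $\mathring f_2=\mathring S$ and $\mathring f_3=0$, Newton's identities force the characteristic polynomial of $\mathring h$ to be $x^4-\tfrac{\mathring S}{2}x^2+e_4$. So at every point the $\mu_i$ are $\pm a,\pm b$ with $a^2+b^2=\mathring S/2$. Moreover
\[
\mathring f_4=\tfrac{\mathring S^2}{2}-4e_4=\tfrac{\mathring S^2}{2}-4a^2b^2\ \ge\ \tfrac{\mathring S^2}{4},
\]
with equality iff $a=b$, i.e. iff the point has the $(2,2)$ pattern $(\mu,\mu,-\mu,-\mu)$. Set
\[
u:=\mathring f_4-\tfrac{\mathring S^2}{4}\ \ge 0 .
\]
Then $u$ vanishes exactly on the $(2,2)$ points, and $u(P)=0$. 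The goal becomes showing $u\equiv0$. Then the principal curvatures are everywhere $H\pm\mu$ with constant multiplicities, so $M$ is isoparametric with $g=2$ and multiplicities $(2,2)$. By the classification of such hypersurfaces (Cartan/Münzner), $M$ is $\mathbb S^2(r)\times\mathbb S^2(\sqrt{1-r^2})$.

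To show $u\equiv0$, I would use \eqref{eq:df4-tracefree} with $\mathring f_3=0$:
\[
\tfrac14\Delta u=(4-\mathring S+4H^2)\mathring f_4+2\mathring{\mathscr A}+\mathring{\mathscr B}.
\]
I would combine this with \eqref{eq:df3}, which now reads
\[
6\sum\mathring h_{ijk}^2\mu_i=-12H\,u,
\]
and with \eqref{eq:dS}, which expresses $\sum\mathring h_{ijk}^2$ as a constant. The aim is a differential inequality of the form
\[
\Delta u\le C\,u+C|\nabla u|
\]
in a neighbourhood of the zero set of $u$. Since $u\ge0$ attains its minimum $0$ at $P$, the strong maximum principle (Hopf) then makes the zero set open. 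It is also closed, so by connectedness $u\equiv0$.

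To get the inequality, I would write $\mu_i^2=\tfrac{\mathring S}{4}+O(\sqrt u)$ and $\mu_i\mu_j=\pm\tfrac{\mathring S}{4}+O(\sqrt u)$ near the $(2,2)$ points, using a frame adapted to the two eigenspaces. This gives $\mathring{\mathscr A}=\tfrac{\mathring S}{4}\sum\mathring h_{ijk}^2+\text{error}$. Then:
\begin{itemize}
\item $\mathring{\mathscr B}$ splits into "same-sign" and "mixed" blocks of $h_{ijk}^2$.
\item The constraints obtained by differentiating $\sum\mu_i^k=\mathrm{const}$ for $k=2,3$ kill the diagonal third-derivative terms $h_{iik}$ within each block.
\item \eqref{eq:df3} ties the mixed block to $u$.
\end{itemize}
The constant part should cancel against $(4-\mathring S+4H^2)\tfrac{\mathring S^2}{4}$, using \eqref{eq:dS} for $\sum\mathring h_{ijk}^2$. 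A consistency check is that at $P$ this cancellation must be exact, since $\Delta u(P)\ge0$ at a minimum.

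The main obstacle is controlling the error terms. Near a $(2,2)$ point the eigenvalues $a$ and $b$ coalesce, so the eigenframe is not smooth there, and naive errors are only $O(\sqrt u)$ rather than $O(u)$. I would first try to express everything through the smooth tensors $\mathring h^2-\tfrac{\mathring S}{4}g$, whose squared norm is a multiple of $u$, and $\nabla\mathring h$. This should make the estimates Lipschitz in $u$ and $|\nabla u|$. If that fails, the fallback is real-analyticity of CMC hypersurfaces: show that $u$ vanishes to infinite order at $P$, iterating the same identities, and then conclude $u\equiv0$ by unique continuation.
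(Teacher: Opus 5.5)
Your reduction is sound up to the maximum-principle step: $\mathring f_3\equiv0$, the spectrum is $\pm a,\pm b$, $u=\mathring f_4-\mathring S^2/4=(a^2-b^2)^2\ge0$ vanishes exactly at $(2,2)$-points, and $u\equiv0$ would give isoparametricity. The gap is the inequality $\Delta u\le Cu+C|\nabla u|$. It is false at $P$ in exactly the case you must exclude. At $P$ write $\mu_i=\epsilon_i\mu$ with $\epsilon=(1,1,-1,-1)$, and set $K:=\sum\mathring h_{ijk}^2=16\mu^2(\mu^2-1-H^2)$, which is constant on $M$. Let $\Sigma_{\mathrm{same}}$ and $\Sigma_{\mathrm{mix}}$ be the sums of $\mathring h_{ijk}^2$ over ordered triples with $\epsilon_i=\epsilon_j$ and $\epsilon_i\neq\epsilon_j$, respectively. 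At $P$ you get $(4-\mathring S+4H^2)\mathring f_4=-\mu^2K$, $2\mathring{\mathscr A}=2\mu^2K$ and $\mathring{\mathscr B}=\mu^2(K-2\Sigma_{\mathrm{mix}})$, so \eqref{eq:df4-tracefree} gives
\[
\tfrac14\Delta u(P)=2\mu^2\,\Sigma_{\mathrm{same}}\ \ge\ \tfrac23\,\mu^2K .
\]
The last inequality holds because among any three indices two lie in the same eigenspace. Since $u(P)=0$ and $\nabla u(P)=0$, your inequality at $P$ is equivalent to $K=0$, which is the lemma itself, so the plan is circular. The constant part does not cancel. A minimum only gives $\Delta u(P)\ge0$, which holds automatically and carries no information. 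Also, \eqref{eq:df3} does not control the mixed block: at $P$ it only says $\sum\epsilon_i\mathring h_{ijk}^2=0$, a balance condition between the two eigenspaces. The analytic fallback fails for the same reason: if $K>0$, then $u$ vanishes to exactly second order at $P$.

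The underlying issue is that all your ingredients are traced Bochner identities. At $P$ they reduce to conditions on the first derivatives $h_{ijk}(P)$, and these conditions are compatible with any value $K>0$. What is missing is genuinely second-order information at $P$. The paper, following \cite{DGW}, uses the Ricci identity
\[
h_{iijj}-h_{jjii}=(\lambda_i-\lambda_j)(1+\lambda_i\lambda_j)=-\tfrac{K}{8\mu},\qquad i\in\{1,2\},\ j\in\{3,4\}.
\]
It combines this with the vanishing at $P$ of $\nabla_m\bigl(\sum\mathring h_{ijk}\mathring h_{ijl}\mathring h_{kl}\bigr)=-2H\nabla_m u$, and with the twice-differentiated power-sum constraints. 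This leads to identities such as $-K/(32\mu)=3K/(32\mu)$ or $0=K/(8\mu)$, each of which forces $K=0$. Since $K$ is constant, $\nabla h\equiv0$ on all of $M$, and no maximum-principle or continuation argument is needed.
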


\begin{proof}
We consider the trace-free second fundamental form $\mathring{h}_{ij} = h_{ij} - H\delta_{ij}$.
Since $H$ is constant, $h_{ijk}=\mathring{h}_{ijk}$ and $h_{ijkl} = \mathring{h}_{ijkl}$. Throughout this proof, we use $\mathring{\ }$ to denote the trace-free quantities.
By \eqref{eq:dS},
\begin{align*}\sum_{i,j,k}\mathring{h}_{ijk}^2 & = (\mathring{S} + 4H^2)(\mathring{S} + 4H^2-4) +16H^2 - 4H(\mathring{f}_3 + 4H^3 + 3H\mathring{S}) \\[1mm]
& =\mathring{S}(\mathring{S} - 4H^2 -4 ) - 4H\mathring{f}_3.
\end{align*}

In the present case, $\mathring{f}_3=0$. Thus, the following key formulas hold:
\begin{equation}\label{eq:hijko}
    \sum_{i,j,k} \mathring{h}_{ijk}^2 = \mathring{S}(\mathring{S}-4H^2-4)=16\mu^2(\mu^2-H^2-1).
\end{equation}
We first use the identity
\[R_{ijkl}=\delta_{ik}\delta_{jl}-\delta_{il}\delta_{jk}
          +h_{ik}h_{jl}-h_{il}h_{jk}.
\]
Ricci's formula gives, for $i\ne j$,
\begin{equation}\label{eq:app22-comm-general}
 h_{iijj}-h_{jjii}
 =(\lambda_i - \lambda_j)(1+\lambda_i\lambda_j).
\end{equation}

For fixed indices $i\in\{1,2\},\ 
    j\in\{3,4\},$
 we have
\begin{align*}
(\lambda_i-\lambda_j)(1+\lambda_i\lambda_j)
&=2\mu\bigl(1+(H+\mu)(H-\mu)\bigr)\\
&=2\mu(1+H^2-\mu^2).
\end{align*}
Combining this identity with \eqref{eq:hijko}, we obtain
\begin{equation}\label{eq:app22-comm-cross}
    h_{iijj}-h_{jjii}
    =-\frac{\sum_{i,j,k}h_{ijk}^2}{8\mu},
    \ \ 
    i\in\{1,2\},\  j\in\{3,4\}.
\end{equation}
For the reverse ordering, the commutator changes sign as $h_{jjii}-h_{iijj}
    =\frac{\sum_{i,j,k}h_{ijk}^2}{8\mu}.$

The identity needed in place of the minimal identity used in
\cite{DGW} is \eqref{eq:df3}. The key identity is $\big(\sum_{i,j,k,l}h_{ijk}h_{ijl}h_{kl}\big)_m = 0$. Here, however, we have a different formula for $\Delta f_3$, which involves $f_4$. By \eqref{eq:df3} and $\mathring{f}_3=0$, we have
\begin{equation}
    0= \Delta \mathring{f}_3 = 12H\mathring{f}_4 - 3H\mathring{S}^2 + 6
    \sum_{i,j,k,l}
\mathring{h}_{ijk}\mathring{h}_{ijl}\mathring{h}_{kl}.
\end{equation}

At $P$, we have
\[\mathring{f}_4 = 4\mu^4,\ \ \ \mathring{S}=4\mu^2,\]
and hence
\[ \sum_{i,j,k,l} \mathring{h}_{ijk} \mathring{h}_{ijl} \mathring{h}_{kl} =0. \]

To compute $\big(\sum_{i,j,k,l}\mathring{h}_{ijk}\mathring{h}_{ijl}\mathring{h}_{kl}\big)_m = \nabla_m(-2H\mathring{f}_4 + H\mathring{S}^2/2)$, it suffices to consider $\nabla_m \mathring{f}_4$, where
\begin{equation}\label{eq:f4m}
    \nabla_m \mathring{f}_4 = 4 \sum_i \mu_i^3 \mathring{h}_{iim}=4\mu^3(\mathring{h}_{11m} + \mathring{h}_{22m} - \mathring{h}_{33m} - \mathring{h}_{44m}).
\end{equation}
Since $\nabla_m \mathring{S}=0$,
\[\mu\mathring{h}_{11m} + \mu \mathring{h}_{22m} - \mu \mathring{h}_{33m} - \mu  \mathring{h}_{44m}=0,\]
and therefore $\mathring{h}_{11m} + \mathring{h}_{22m} =\mathring{h}_{33m} + \mathring{h}_{44m}$. Substituting this relation into \eqref{eq:f4m} shows that $\nabla_m\mathring{f}_4=0.$
Therefore, we still have $(\sum_{i,j,k,l}\mathring{h}_{ijk}\mathring{h}_{ijl}\mathring{h}_{kl})_m =0$, and Lemma 3.6 in \cite{DGW} remains valid in the trace-free setting.

\vspace{2mm}

Since $H$, $\mathring S$, and $\mathring f_3$ are constant, the
differentiation identities in Section 3 of \cite{DGW} that depend
only on the constancy of the corresponding power sums remain valid
for $\mathring h$, with $\mu_i$ in place of $\lambda_i$. Thus, the same argument applies in the trace-free setting.

With these substitutions, the two tensor alternatives in
\cite{DGW} give, respectively,
\[
-\frac{\sum_{i,j,k}\mathring{h}_{ijk}^2}{32\mu}=\frac{3\sum_{i,j,k}\mathring{h}_{ijk}^2}{32\mu}\  \text{  when} \   \mathring{h}_{113}(P) = 0,\]
or
\[
0=\frac{\sum_{i,j,k}h_{ijk}^2}{8\mu}\ \ \text{when } \ \mathring{h}_{114}(P) =0.
\]

Since $\mu>0$ and $\mathring{S},H$ are constant, it follows that
\[\sum_{i,j,k}h_{ijk}^2=\mathring{S}(\mathring{S}-4H^2-4)\equiv 0,\]
which implies that $h_{ijk}\equiv0$ for $1\le i,j,k \le 4$. Hence, $M^4$ is isoparametric. By the classification in \cite{C}, it is a Clifford torus $\mathbb{S}^2(r)\times\mathbb{S}^2(\sqrt{1-r^2})$ with $0<r<1$.

\end{proof}

\begin{remark}
   (i) Here we use the local component calculation in the proof of
\cite{DGW}, rather than its minimal
classification theorem. The corresponding quantities are replaced as follows:
\[
\begin{array}{c|c}
\hline
\text{minimal} & \text{CMC}\\ \hline
h_{ij} & \mathring{h}_{ij}\\[1mm]
\lambda_i & \mu_i\\[1mm]
f_3\equiv0 & \mathring f_3\equiv0\\[1mm]
\sum h_{ijk}^2=S(S-4) & \sum \mathring{h}_{ijk}^2=S(S-4)+16H^2-4H f_3\\[1mm]
\bigl(\sum h_{ijk}h_{ijl}h_{kl}\bigr)_m=0
& \bigl(\sum \mathring{h}_{ijk} \mathring{h}_{ijl}\mathring{h}_{kl}\bigr)_m=0.\\[1mm]
\hline
\end{array}
\]
The local calculation does not use $\Delta\mathring f_4$.

\vspace{2mm}

(ii) Among the Clifford tori in the family $\mathbb{S}^2(r)\times \mathbb{S}^2(\sqrt{1-r^2})$, only $\mathbb{S}^2(\sqrt{2}/2)\times \mathbb{S}^2(\sqrt{2}/2)$ is Willmore and minimal.

\end{remark}

\vspace{2mm}

\subsubsection*{Case II. $\lambda_1=\lambda_2=\lambda_3=H-\mu < H+3\mu =\lambda_4$.}

\begin{lemma}\label{lem:31}
Let $M^4 \hookrightarrow \mathbb S^5(1)$ be a closed CMC hypersurface with
constant $S$ and $f_3$. Suppose that, at the point
$P\in M$,
\[
 \lambda_1=\lambda_2=\lambda_3=H-\mu,\ \ \lambda_4 = H+3\mu,\ \ \mu >0,
\]
then $M^4$ is isoparametric and is a Clifford torus $\mathbb{S}^3 (r) \times \mathbb{S}^1(\sqrt{1-r^2}),\ 0<r<1$.
\end{lemma}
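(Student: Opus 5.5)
The plan is to follow the same philosophy as Lemma~\ref{lem:two-eigenvalues-22}. We show that $\sum_{i,j,k}\mathring h_{ijk}^2$ vanishes at the single point $P$. Since by \eqref{eq:dS} this quantity equals $\mathring S(\mathring S-4H^2-4)-4H\mathring f_3$, which is a constant, it then vanishes identically on $M^4$. Hence $h_{ijk}\equiv0$, $M^4$ is isoparametric with two principal curvatures of multiplicities $(3,1)$, and Cartan's classification \cite{C} gives $\mathbb S^3(r)\times\mathbb S^1(\sqrt{1-r^2})$.

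\textbf{Step 1: first-order data at $P$.} At $P$ we have $\mu_1=\mu_2=\mu_3=-\mu$ and $\mu_4=3\mu$, so
\[
\mathring S=12\mu^2,\qquad \mathring f_3=24\mu^3,\qquad \mathring f_4=84\mu^4.
\]
Differentiating the constancy of $\mathring S$ and using $\sum_i\mathring h_{iim}=0$ gives, for every $m$,
\[
\mathring h_{44m}=0,\qquad \mathring h_{11m}+\mathring h_{22m}+\mathring h_{33m}=0.
\]
The derivative of $\mathring f_3$ is then automatically zero, and so is $\nabla_m\mathring f_4=4\sum_i\mu_i^3\mathring h_{iim}$. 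Set $A=\sum_{a,b,c\le3}\mathring h_{abc}^2$ and $B=\sum_{a,b\le3}\mathring h_{ab4}^2$. Every component with two indices equal to $4$ vanishes, so
\[
\sum\mathring h_{ijk}^2=A+3B,\qquad \sum\mathring h_{ijk}^2\mu_i=\mu(B-A),\qquad \mathring{\mathscr A}=\mu^2(A+11B),\qquad \mathring{\mathscr B}=\mu^2(A-5B).
\]

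\textbf{Step 2: algebraic relations.} Substituting the values at $P$ into \eqref{eq:df3} (with $\Delta\mathring f_3=0$) gives one linear relation between $A$ and $B$. Equation \eqref{eq:dS} gives $A+3B$ as a constant. Next comes the key observation. For a quartic with $\sum\mu_i=0$ and prescribed $\mathring S$ and $\mathring f_3$, the only free parameter is $\mathring f_4$. The endpoints of its admissible interval (real roots) occur exactly at configurations with a triple root. Hence the value $\mathring f_4(P)$ is a global extremum of $\mathring f_4$ on $M$: for $\mu>0$ and a triple root at $-\mu$, I expect a maximum. This forces $\Delta\mathring f_4(P)\le0$, and indeed a definite sign on the Hessian $\nabla^2\mathring f_4(P)$. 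Plugging the above into \eqref{eq:df4-tracefree} gives
\[
\tfrac14\Delta\mathring f_4(P)=(4-12\mu^2+4H^2)\,84\mu^4+\tfrac73H\cdot12\mu^2\cdot24\mu^3+\mu^2(3A+17B).
\]
Combined with the two linear relations, this yields an inequality I hope to turn into $A+3B\le0$.

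\textbf{Step 3 (main obstacle).} It is not clear that Step 2 alone closes: the sign from the extremum could point the wrong way, or combine only into a tautology. In that case I would imitate the Ricci-commutation argument of Lemma~\ref{lem:two-eigenvalues-22}. For $a\le3$, \eqref{eq:app22-comm-general} gives
\[
\mathring h_{aa44}-\mathring h_{44aa}=-4\mu\bigl(1+(H-\mu)(H+3\mu)\bigr).
\]
I would compute $\mathring h_{44aa}$ and $\mathring h_{aa44}$ by twice differentiating the constancy of $\mathring S$ and $\mathring f_3$, and the extremality of $\mathring f_4$. Because $\mu_4$ is simple, $\mathring h_{44k}\equiv0$ should propagate as a smooth eigenvalue condition, so these second derivatives are expressible in terms of $A$ and $B$. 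Equating both sides against $A+3B=\mathring S(\mathring S-4H^2-4)-4H\mathring f_3$ should force $A+3B=0$, just as the two tensor alternatives did in Case~I. This concludes the proof.
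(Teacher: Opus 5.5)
\textbf{Verdict: there is a genuine gap, though your own Step~2 data already close it.} Your Step~1 bookkeeping is correct. However, the proposal never actually proves $\sum\mathring h_{ijk}^2(P)=0$. Step~2 ends with an inequality you ``hope'' to turn into $A+3B\le0$, and Step~3 lists computations that ``should'' work, so the decisive step is missing.

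The observation driving Step~2 is also misstated. For fixed $\mathring S$ and $\mathring f_3$, the admissible values of $\mathring f_4$ correspond to those $e_4$ for which $t^4-\tfrac12\mathring S\,t^2-\tfrac13\mathring f_3\,t+e_4$ has only real roots. The endpoints of this interval are generically configurations with a \emph{double} root, not a triple root. A triple root occurs only when $|\mathring f_3|=\mathring S^{3/2}/\sqrt3$, and then the interval collapses to a single point. That degenerate case is exactly the situation at $P$, and it is the paper's entire proof. By Lemma~\ref{lemma:sqrt3}, $\mathring f_3\le\mathring S^{3/2}/\sqrt3$, with equality only for the spectrum $(-\mu,-\mu,-\mu,3\mu)$ up to order. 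At $P$ one has $\mathring f_3=24\mu^3=\mathring S^{3/2}/\sqrt3$. Since $\mathring S$ and $\mathring f_3$ are constant, equality holds at every point, so the principal curvatures are constant and no differentiation is needed.

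You do not need $\Delta\mathring f_4$, an extremality principle, or the Ricci identity; you only had to combine your two linear relations. Evaluate \eqref{eq:df3} at $P$ with $\mathring f_4=84\mu^4$ and $\sum\mathring h_{ijk}^2\mu_i=\mu(B-A)$, and divide by $6\mu$. This gives
\[
A-B=12\mu^2(4-12\mu^2+4H^2)+96H\mu^3,
\]
whereas \eqref{eq:dS} gives
\[
A+3B=12\mu^2(12\mu^2-4H^2-4)-96H\mu^3=-(A-B).
\]
Hence $2(A+B)=0$, so $A=B=0$ and $\sum\mathring h_{ijk}^2$ vanishes at $P$. Since this quantity is constant, it vanishes everywhere, so $\nabla h\equiv0$. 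The spectrum is then the one at $P$ at every point, and the classification you invoke finishes the proof.

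Conceptually, the polynomial $(t+\mu)(t-3\mu)$ vanishes on the spectrum at $P$. So in $\tfrac13\Delta\mathring f_3-\mu\Delta\mathring S$ all $\Delta h_{ii}$ terms cancel, using $\sum_i\Delta h_{ii}=0$, leaving $2\sum\mathring h_{ijk}^2(\mu_i-\mu)=-4\mu(A+B)$.

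Comparing the two routes: your repaired argument is a pointwise second-order computation giving $\nabla h(P)=0$, and it then relies on the constancy of $|\nabla h|^2$. The paper's argument is zeroth-order and fixes the spectrum globally through the equality case of a sharp algebraic inequality.
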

To prove Lemma~\ref{lem:31}, we use the following elementary inequality.
\begin{lemma}\label{lemma:sqrt3}
    Let $\mu_1,\mu_2,\mu_3,\mu_4\in \mathbb{R}$ with $\mu_1+\mu_2+\mu_3+\mu_4=0$. Then
    \[ -\frac{1}{\sqrt{3}} (\mu_1^2 + \mu_2^2 + \mu_3^2 + \mu_4^2)^{3/2} \le \mu_1^3 + \mu_2^3 + \mu_3^3 + \mu_4^3  \le \frac{1}{\sqrt{3}} (\mu_1^2 + \mu_2^2 + \mu_3^2 + \mu_4^2)^{3/2}.\]
    Equality holds if and only if $\mu_1=\mu_2=\mu_3 \text{ and}\ \mu_4=-3\mu_1$ (up to a permutation of the indices).
\end{lemma}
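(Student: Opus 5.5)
The plan is to treat this as a constrained extremum problem on the sphere: maximize (and minimize) $g(\mu)=\sum_i\mu_i^3$ over the set
\[
\Sigma=\Bigl\{\mu\in\R^4:\ \sum_i\mu_i=0,\ \sum_i\mu_i^2=1\Bigr\}.
\]
The inequality then follows for general $\mu$ by homogeneity: $g$ is cubic and the constraint is quadratic, so the bound scales to $(\sum\mu_i^2)^{3/2}$. The case $\mu=0$ is trivial. The lower bound follows from the upper bound applied to $-\mu$. It therefore suffices to show $\max_\Sigma g=1/\sqrt3$, attained exactly at the permutations of $\frac{1}{2\sqrt3}(-1,-1,-1,3)$, and to check that the minimum is attained only at the negatives of these.

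For the maximum, $\Sigma$ is compact, so a maximum exists. Lagrange multipliers give
\[
3\mu_i^2=\alpha+2\beta\mu_i\qquad\text{for every } i.
\]
Hence each $\mu_i$ is a root of a single quadratic, and the $\mu_i$ take at most two distinct values $a\neq b$. The case of a single value is excluded, since $\sum\mu_i=0$ with equal entries forces $\mu=0\notin\Sigma$. It remains to enumerate the multiplicity patterns up to permutation:
\begin{itemize}
\item[(i)] $(a,a,b,b)$ with $a=-b$: here $g=0$.
\item[(ii)] $(a,a,a,b)$ with $b=-3a$ and $12a^2=1$: here $g=3a^3-27a^3=-24a^3=\mp 1/\sqrt3$, with the sign opposite to that of $a$.
\end{itemize}
So the maximum $1/\sqrt3$ occurs exactly when $a=-1/(2\sqrt3)$, which is the configuration $\mu_1=\mu_2=\mu_3$, $\mu_4=-3\mu_1$ with $\mu_4>0$. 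The minimum $-1/\sqrt3$ occurs at the same pattern with $\mu_4<0$. Since every extremum is a critical point, this settles the equality cases as well: in both the upper and lower equality, $\mu$ has the three-equal pattern, which matches the statement.

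As an alternative cleaner route, one could write $\mu_i=x_i$ and use the identity for $p_3$ in terms of $e_2,e_3$ when $e_1=0$, namely $p_3=3e_3$ and $p_2=-2e_2$. The problem then reduces to bounding $e_3$ given $e_2$, which is the discriminant condition for a quartic to have four real roots. The Lagrange approach is more direct, so I would use it.

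The main (mild) obstacle is justifying the claim that at most two distinct values occur. This holds because all coordinates satisfy the same quadratic equation and $\beta$ is common to all of them. The gradient of the constraint $(1,\dots,1)$, $(2\mu_i)$ is linearly independent on $\Sigma$, so the Lagrange conditions apply.
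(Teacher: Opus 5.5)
Your proposal is correct and follows essentially the same route as the paper: normalize $\sum_i\mu_i^2=1$, apply Lagrange multipliers to get a single quadratic satisfied by every $\mu_i$, so the $\mu_i$ take at most two values, and compare the $(a,a,b,b)$ and $(a,a,a,b)$ patterns, which give the values $0$ and $\pm1/\sqrt3$. You also check the constraint qualification and read off the equality case from the fact that every extremum is one of the enumerated critical points; the paper leaves both of these details implicit.
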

\begin{proof}
    WLOG, let $\mu_1^2 + \mu_2^2 + \mu_3^2 + \mu_4^2=1$, and define
    $$L =\mu_1^3 + \mu_2^3 + \mu_3^3 + \mu_4^3 - a (\mu_1^2 + \mu_2^2 + \mu_3^2 + \mu_4^2-1) - b (\mu_1+\mu_2+\mu_3+\mu_4).$$
\[\frac{\partial L}{\partial\mu_i} = 3\mu_i^2-2a\mu_i-b=0.\]
Since the polynomial $3t^2-2at -b$ has at most two real roots, the extrema occur when either $\mu_1=\mu_2=\mu_3= \frac{\pm 1}{2\sqrt{3}}, \mu_4=\frac{\mp3}{2\sqrt{3}}$ or $\mu_1=\mu_2=\frac{\pm1}{2}, \mu_3 = \mu_4=\frac{\mp 1}{2}$.
Comparing the corresponding values, we obtain the bounds $-1/\sqrt{3}$ and $1/\sqrt{3}$. Therefore,
\[-\frac{1}{\sqrt{3}} (\mu_1^2 + \mu_2^2 + \mu_3^2 + \mu_4^2)^{3/2} \le \mu_1^3 + \mu_2^3 + \mu_3^3 + \mu_4^3  \le \frac{1}{\sqrt{3}} (\mu_1^2 + \mu_2^2 + \mu_3^2 + \mu_4^2)^{3/2}.\]
\end{proof}

Up to reversing the orientation, we may assume $\mathring{f}_3 >0.$ Since $\mathring{f}_3=24 \mu^3$ and $\mathring{S}=12\mu^2$, a direct calculation gives $\mathring{f}_3 = \frac{1}{\sqrt{3}}\mathring{S}^{3/2}$.

By Lemma~\ref{lemma:sqrt3}, $\mu_1 : \mu_2 : \mu_3 : \mu_4= -1: -1: -1: 3$ everywhere on $M^4$. Since $\mathring{S}=\sum_i\mu_i^2$ is constant, the values of the $\mu_i$ must be $\mu_1=\mu_2=\mu_3=-\mu, \mu_4= 3\mu$ globally.
Thus, the principal curvatures are constant, and $M^4$ is isoparametric. By the classification theorem in \cite{C}, it must be a Clifford torus $\mathbb{S}^3(r)\times\mathbb{S}^1(\sqrt{1-r^2})$ with $0<r<1$.
\begin{remark}
    Among the Clifford tori in the family $\{\mathbb{S}^3(r)\times\mathbb{S}^1(\sqrt{1-r^2}):\ 0<r<1\}$, only $\mathbb{S}^3(1/2)\times \mathbb{S}^1(\sqrt{3}/2)$ is Willmore, and it is nonminimal.
\end{remark}

\vspace{3mm}

\section{Four Distinct Principal Curvatures Everywhere}

In this section, we further assume that $M^4$ is Willmore and prove
Theorem~\ref{th:willmore}.

\begin{lemma}\label{lemma:4root}
Let $M^4 \hookrightarrow \mathbb{S}^5(1)$ be a closed CMC Willmore
hypersurface with constant scalar curvature. If $M^4$ has four distinct
principal curvatures everywhere, then $M^4$ is isoparametric.

In fact, it is minimal and is therefore Cartan's minimal hypersurface.
\end{lemma}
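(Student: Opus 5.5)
Assume four distinct principal curvatures everywhere, so $\mu_1<\mu_2<\mu_3<\mu_4$ are smooth functions on all of $M^4$ and we may use a global smooth orthonormal frame of principal directions (after passing to a finite cover if needed). The plan mirrors the minimal case of \cite{SX,HXZ}, rewritten in trace-free form. Since $\mathring S$ and $\mathring f_3=-H\mathring S$ are constant by \eqref{eq:f30willmore}, the first two trace-free power sums are fixed. Differentiating gives, for each $m$,
\[
\sum_i \mu_i\,\mathring h_{iim}=0,\qquad \sum_i\mu_i^2\,\mathring h_{iim}=0,
\]
together with $\sum_i\mathring h_{iim}=0$. Because the $\mu_i$ are distinct, the Vandermonde structure gives $\mathring h_{iim}=c_i\,\nabla_m \mathring f_4$ for explicit rational functions $c_i$ of the $\mu$'s. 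Thus all diagonal derivatives are controlled by the single one-form $d\mathring f_4$, exactly as in the minimal case.

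Next, I would combine \eqref{eq:df3} and \eqref{eq:df4-tracefree}. Feeding $\mathring f_3=-H\mathring S$ into \eqref{eq:df3} expresses $\sum \mathring h_{ijk}^2\mu_i$ as an affine function of $\mathring f_4$. The Simons identity \eqref{eq:dS} gives $\sum\mathring h_{ijk}^2$ as a constant. Decomposing the $h_{ijk}^2$ by index type ($iii$, $iij$, and $ijk$ with all indices distinct), and using that $\mathring h_{iim}$ is proportional to $\nabla_m\mathring f_4$, I obtain linear relations. These express $|\nabla\mathring f_4|^2$-type terms and the single off-diagonal quantity $\mathring h_{123}^2+\dots$ (in dimension four, $h_{ijk}$ with distinct indices) in terms of $\mathring f_4$ and constants. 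Substituting into \eqref{eq:df4-tracefree} then yields an equation of the form
\[
\Delta\mathring f_4 = P(\mu) + Q(\mu)\,|\nabla \mathring f_4|^2 ,
\]
which by itself does not close the argument, because the sign of $P$ is not controlled.

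The key step, which I expect to be the main obstacle, is to construct a weighted differential $3$-form in the spirit of \cite{SX}. I would take
\[
\Phi=\sum_m (-1)^{m-1} w\,\theta_{m}\,\omega_1\wedge\cdots\widehat{\omega_m}\cdots\wedge\omega_4 ,
\]
built from connection forms $\omega_{ij}$ between principal directions, of the form $\omega_{ij}=\frac{\mathring h_{ijk}}{\mu_i-\mu_j}\omega_k$, weighted by products of $(\mu_i-\mu_j)$. The form is chosen so that $d\Phi$ is computed via the Gauss equation $R_{ijij}=1+\lambda_i\lambda_j$ and the structure equations. After integrating, Stokes' theorem gives $\int_M d\Phi=0$, and the integrand should reduce to $\int_M F(\mu)\,dM$ plus terms with a sign. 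Here $F$ is a polynomial which, under $\sum\mu_i=0$, $\mathring S$ fixed, and $\mathring f_3=-H\mathring S$, factors with a factor $H$ (or $H^2$) times a definite expression on the four-distinct-root region. The nonminimal terms come from $\lambda_i\lambda_j=(H+\mu_i)(H+\mu_j)$ in the curvature and from the $H$ terms in \eqref{eq:df3}. The delicate part is choosing the weight so that these $H$-dependent contributions collect into a definite sign; I would first try the weight used in \cite{HXZ} with $\lambda_i$ replaced by $\mu_i$, and then track the correction terms. I would also use Lemma~\ref{lemma:sqrt3} to bound $|\mathring f_3|=|H|\mathring S\le \mathring S^{3/2}/\sqrt3$, which restricts $H^2\le\mathring S/3$ and helps fix the sign.

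From $\int_M H\cdot(\text{definite})=0$ I would conclude $H=0$. Then $M^4$ is minimal with constant $S$ and $f_3=0$, so Theorem~\ref{thm:dgw} (or Theorem~\ref{thm:hxz}) shows it is isoparametric. Since it has four distinct principal curvatures, the classification $S\in\{0,4,12\}$ forces $S=12$, which is Cartan's minimal hypersurface.
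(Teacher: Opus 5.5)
There is a genuine gap. Your outer frame matches the paper: Stokes' theorem for a 3-form built from $\omega_k\wedge\omega_l\wedge\omega_{ij}$, conclude $H=0$, then invoke Theorem~\ref{thm:dgw} and count distinct principal curvatures. But the step that carries the proof is only hoped for.

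You never fix the form. The displayed $\Phi=\sum_m(-1)^{m-1}w\,\theta_m\,\omega_1\wedge\cdots\widehat{\omega_m}\cdots\wedge\omega_4$ is either not a 3-form (if $\theta_m$ is a connection 1-form) or just the Hodge dual of an unspecified 1-form. ``Weights given by products of $(\mu_i-\mu_j)$, then track the correction terms'' does not say what integrand you get or why it has a sign. Your claim that $\int_M d\Phi$ becomes $\int_M F(\mu)$ ``plus terms with a sign'', with $F=H\cdot(\text{definite})$, is exactly what must be proved. For $\int_M d\Phi=0$ to force $H=0$, the gradient terms and $F$ must have a common sign determined by $H$, and nothing in your plan produces that. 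The $\Delta\mathring f_4$ detour, which you concede does not close, does not supply it, and neither does the bound $H^2\le\mathring S/3$ from Lemma~\ref{lemma:sqrt3}. Your Vandermonde parametrization of $\mathring h_{iim}$ by $\nabla_m\mathring f_4$ is harmless; it is equivalent to the paper's parametrization by $h_{44m}$.

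The missing idea is that no new weight and no correction terms are needed. Take the Spruck--Xiao form with $\lambda$ replaced by $\mu$:
\[
\Phi=\sum_{i<j}(\mu_i+\mu_j)\theta_{ij},\qquad \theta_{ij}=\pm\,\omega_k\wedge\omega_l\wedge\omega_{ij},\qquad \omega_{ij}=\sum_m \frac{\mathring h_{ijm}\,\omega_m}{\mu_i-\mu_j},
\]
where $\{k,l\}$ is complementary to $\{i,j\}$. After writing $h_{iik}$ in terms of $h_{44k}$ by Cramer's rule, $d\Phi$ splits into three pieces:
\begin{itemize}
\item the terms in $h_{123}^2, h_{124}^2,\dots$ (all indices distinct) cancel identically by the algebraic identity \eqref{crossing};
\item the coefficient of $h_{44k}^2$ is $\mathring f_3\,l_k$, with $l_k\ge 0$ because $4\mu_k^2\le 3\mathring S$;
\item $H$ enters only through the Gauss equation, giving $\tilde R=\sum_{i<j}(\mu_i+\mu_j)(1+\lambda_i\lambda_j)=2H\mathring S-\mathring f_3$.
\end{itemize}
The Willmore relation $\mathring f_3=-H\mathring S$ then makes every term a multiple of $H$ with the same sign:
\[
0=\int_M d\Phi=-H\mathring S\int_M\Bigl(\sum_k l_k h_{44k}^2+3\Bigr)\mathrm{vol}.
\]
Since $\mathring S>0$, this forces $H=0$. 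Without this computation, or an equivalent one, your argument does not reach $H=0$.
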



Since $\lambda_1 < \lambda_2 < \lambda_3 < \lambda_4$ everywhere and
$H$, $S$, and $f_3$ are constant, differentiation yields, for each
$k=1,2,3,4$, the system
\[
\begin{cases}
 h_{11k} + h_{22k} + h_{33k} + h_{44k} &= 0,\\[2mm]
 \lambda_1 h_{11k} + \lambda_2 h_{22k}
 + \lambda_3h_{33k} + \lambda_4 h_{44k}&= 0,\\[2mm]
 \lambda_1^2 h_{11k} + \lambda_2^2 h_{22k}
 + \lambda_3^2 h_{33k} + \lambda_4^2 h_{44k}& = 0.
\end{cases}
\]
Because the $\lambda_i$ are distinct, Cramer's rule expresses
$h_{11k}$, $h_{22k}$, and $h_{33k}$ in terms of $h_{44k}$:
\begin{equation}\label{solu}
    \begin{aligned}
        h_{11k} &= -  \frac{(\lambda_3 - \lambda_4)(\lambda_2 - \lambda_4)}{(\lambda_3 - \lambda_1)(\lambda_2 - \lambda_1)} h_{44k},\\[2mm]
        h_{22k} & = - \frac{(\lambda_4 - \lambda_3)(\lambda_4 - \lambda_1)}{(\lambda_2 - \lambda_3)(\lambda_2 - \lambda_1)} h_{44k},\\[2mm]
        h_{33k} &= - \frac{(\lambda_4 - \lambda_1)(\lambda_4 - \lambda_2)}{(\lambda_3 - \lambda_1)(\lambda_3 - \lambda_2)} h_{44k}.
    \end{aligned}
\end{equation}

Passing to the trace-free principal curvatures
$\mu_i = \lambda_i - H$ and using the constancy of $H$, we have
\[
    \lambda_i - \lambda_j = \mu_i - \mu_j,
    \qquad
    h_{ijk} = \mathring{h}_{ijk},
\]
where $\mathring{h}_{ij} = h_{ij} - H\delta_{ij}.$ Consequently, for
$1\le i < j \le 4$,
\[
R_{ijij} = 1 + \lambda_i \lambda_j,\qquad
\omega_{ij}
= \sum_{k=1}^4 \frac{h_{ijk} \omega_k}{\lambda_i - \lambda_j}
= \sum_{k=1}^4
  \frac{\mathring{h}_{ijk} \omega_k}{\mu_i - \mu_j}.
\]

Now we borrow the same 3-form defined as in \cite{SX}:
\begin{definition} \label{theta:Phi}
    Let
\begin{align*}
\theta_{12} &= \omega_3 \wedge \omega_4 \wedge \omega_{12},\  
\theta_{13} = \omega_4 \wedge \omega_2 \wedge \omega_{13}, \ 
\theta_{14} = \omega_2 \wedge \omega_3 \wedge \omega_{14}, \\
\theta_{23} &= \omega_1 \wedge \omega_4 \wedge \omega_{23}, \ \theta_{24} = \omega_3 \wedge \omega_1 \wedge \omega_{24},\ 
\theta_{34} = \omega_1 \wedge \omega_2 \wedge \omega_{34},
\end{align*}
and define the basic 3-form \[
\Phi = \sum_{i<j} (\mu_i + \mu_j) \theta_{ij}.
\]
\end{definition} 
As under the admissible coordinate as in \cite{AB}, \begin{align*}d\Phi &= \sum_{i<j}d(\mu_i + \mu_j)\wedge\theta_{ij} + (\mu_i+\mu_j)\,d\theta_{ij}\\[1mm]
&=\sum_{i<j} \left( \Big(\sum_k(h_{iik} + h_{jjk})\omega_k \wedge\theta_{ij} \Big)  + (\mu_i + \mu_j) \, d\theta_{ij}\right).
\end{align*}
 To compute $d \theta_{ij}$, by virtue of symmetry, it suffices to calculate $d\theta_{12}$ and apply the corresponding permutations. Alternatively, we refer to Lemma 3.7 in \cite{SX} for the complete explicit calculations.
 \begin{proof}[Proof of Lemma \ref{lemma:4root}]
\textbf{Step 1: (computation of $d\Phi$).}
\begin{align}\label{theta12}
d\theta_{12} 
&= (d\omega_3)\wedge \omega_4 \wedge \omega_{12}
- \omega_3 \wedge (d\omega_4)\wedge \omega_{12}
+ \omega_3 \wedge \omega_4 \wedge (d\omega_{12}) \nonumber \\[2mm]
&= \omega_{3k}\wedge \omega_k \wedge \omega_4 \wedge \omega_{12}
- \omega_3 \wedge \omega_{4k}\wedge \omega_k \wedge \omega_{12} + \omega_3 \wedge \omega_4 \wedge \left(\omega_{1k}\wedge \omega_{k2}
- R_{1212}\,\omega_1 \wedge \omega_2\right) \nonumber \\[2mm]
&= \omega_{31}\wedge \omega_1 \wedge \omega_4 \wedge \omega_{12}
+ \omega_{32}\wedge \omega_2 \wedge \omega_4 \wedge \omega_{12}
- \omega_3 \wedge \omega_{41}\wedge \omega_1 \wedge \omega_{12} \nonumber\\[1mm]
&\quad  - \omega_3 \wedge \omega_{42}\wedge \omega_2 \wedge \omega_{12}
+ \omega_3 \wedge \omega_4 \wedge \omega_{13}\wedge \omega_{32} + \omega_3 \wedge \omega_4 \wedge \omega_{14}\wedge \omega_{42} - R_{1212}\mathrm{vol} \nonumber\\[2mm]
& = \vartheta_1 + \vartheta_2 - \vartheta_3 - \vartheta_4 + \vartheta_5 + \vartheta_6 - R_{1212}\mathrm{vol}.    
\end{align}
The forms denoted by $\vartheta_1, \dots, \vartheta_6$ expand as follows:
\begin{align}
\vartheta_1
&=\left(\frac{h_{312}\omega_2 + h_{313}\omega_3}{\mu_3 - \mu_1}\right)
\wedge \omega_1 \wedge \omega_4 \wedge
\left(\frac{h_{122}\omega_2 + h_{123}\omega_3}{\mu_1 - \mu_2}\right)  \nonumber\\[2mm]
& = \left(
\frac{h_{123}^2}{(\mu_3 - \mu_1)(\mu_1 - \mu_2)}
- \frac{h_{331}h_{221}}{(\mu_3 - \mu_1)(\mu_1 - \mu_2)}
\right) \mathrm{vol}.
\end{align}

\begin{align}
\vartheta_2
&= \left(\frac{h_{321}\omega_1 + h_{323}\omega_3}{\mu_3 - \mu_2}\right)
\wedge \omega_2 \wedge \omega_4 \wedge
\left(\frac{h_{123}\omega_3 + h_{121}\omega_1}{\mu_1 - \mu_2}\right) \nonumber \\[2mm]
&= \left(
-\frac{h_{123}^2}{(\mu_3 - \mu_2)(\mu_1 - \mu_2)}
+ \frac{h_{332}h_{112}}{(\mu_3 - \mu_2)(\mu_1 - \mu_2)}
\right) \mathrm{vol}.
\end{align}

\begin{align}
\vartheta_3
&= \omega_3 \wedge 
\left(\frac{h_{412}\omega_2 + h_{414}\omega_4}{\mu_4 - \mu_1}\right)
\wedge \omega_1 \wedge
\left(\frac{h_{124}\omega_4 + h_{122}\omega_2}{\mu_1 - \mu_2}\right) \nonumber\\[2mm]
&= \left(
-\frac{h_{124}^2}{(\mu_4 - \mu_1)(\mu_1 - \mu_2)}
+ \frac{h_{441}h_{221}}{(\mu_4 - \mu_1)(\mu_1 - \mu_2)}
\right)  \mathrm{vol}.
\end{align}
\begin{align}
\vartheta_4
&= \omega_3 \wedge 
\left(\frac{h_{421}\omega_1 + h_{424}\omega_4}{\mu_4 - \mu_2}\right)
\wedge \omega_2 \wedge
\left(\frac{h_{124}\omega_4 + h_{121}\omega_1}{\mu_1 - \mu_2}\right) \nonumber\\[2mm]
&= \left(
\frac{h_{124}^2}{(\mu_4 - \mu_2)(\mu_1 - \mu_2)}
- \frac{h_{442}h_{112}}{(\mu_4 - \mu_2)(\mu_1 - \mu_2)}
\right) \mathrm{vol}.
\end{align}

\begin{align}
\vartheta_5
&= \omega_3 \wedge \omega_4 \wedge
\left(\frac{h_{132}\omega_2 + h_{131}\omega_1}{\mu_1 - \mu_3}\right)
\wedge
\left(\frac{h_{321}\omega_1 + h_{322}\omega_2}{\mu_3 - \mu_2}\right) \nonumber\\[2mm]
&= \left(
-\frac{h_{123}^2}{(\mu_1 - \mu_3)(\mu_3 - \mu_2)}
+ \frac{h_{113}h_{223}}{(\mu_1 - \mu_3)(\mu_3 - \mu_2)}
\right)  \mathrm{vol}.
\end{align}

\begin{align}
\vartheta_6
&= \omega_3 \wedge \omega_4 \wedge
\left(\frac{h_{141}\omega_1 + h_{142}\omega_2}{\mu_1 - \mu_4}\right)
\wedge
\left(\frac{h_{421}\omega_1 + h_{422}\omega_2}{\mu_4 - \mu_2}\right) \nonumber\\[2mm]
&= \left(
-\frac{h_{124}^2}{(\mu_1 - \mu_4)(\mu_4 - \mu_2)}
+ \frac{h_{114}h_{224}}{(\mu_1 - \mu_4)(\mu_4 - \mu_2)}
\right)  \mathrm{vol}.
\end{align}
\vspace{2mm}

By exchanging the indices thoroughly, we can get all the expressions for $d\theta_{ij},\ i,j=1,2,3,4.$ For more details, please see \cite{ChengLiMin} as well. 

\subsubsection*{(i) The Coefficient of $h_{44k}^2$}
Replacing $\lambda_i$ by $\mu_i$ for the same calculation as in \cite{ChengLiMin} the coefficient of $h_{44k}^2$ in $d\Phi$, denoted by $L_k$, is
$$L_k = \mathring{f}_3 \cdot \frac{2(3\mathring{S} - 4\mu_k^2)}{3(\mu_1-\mu_2)^2(\mu_1-\mu_3)^2(\mu_2-\mu_3)^2} =:\mathring{f}_3l_k,$$
Since $\mu_k^2 = (-\sum_{i \neq k} \mu_i)^2 \le 3 \sum_{i \neq k} \mu_i^2 = 3(\mathring{S} - \mu_k^2),$
it shows that $l_k \ge 0$.
\vspace{2mm}


\subsubsection*{(ii) Crossing Terms $h_{ijk}^2$}
\begin{align}\label{crossing}
    & \frac{\mu_i + \mu_j}{(\mu_i-\mu_k)(\mu_j-\mu_k)} - \frac{\mu_i + \mu_k}{(\mu_i-\mu_j)(\mu_j-\mu_k)} + \frac{\mu_j + \mu_k}{(\mu_i-\mu_j)(\mu_i-\mu_k)} \nonumber\\[2mm]
    &= \frac{\left(\mu_i^2-\mu_j^2 - \mu_i^2 + \mu_k^2 + \mu_j^2 -\mu_k^2\right) }{(\mu_i-\mu_k)(\mu_j-\mu_k)(\mu_i-\mu_j)} \nonumber\\[2mm]
    &= 0,
\end{align}

\subsubsection*{(iii) Curvature Term}
$$R_{ijij} = 1 + (\mu_i+H)(\mu_j+H) = 1 + \mu_i\mu_j + H(\mu_i+\mu_j) + H^2$$

\begin{align*}
   \tilde{R} &= \sum_{i<j} (\mu_i + \mu_j)R_{ijij} \\[1.5mm]
   &= (1 + H^2)\sum_{i<j}(\mu_i + \mu_j) + H\sum_{i<j}(\mu_i + \mu_j)^2 + \sum_{i<j}(\mu_i + \mu_j)\mu_i \mu_j\\[1.5mm]
    &= (1+H^2)\cdot 3 \sum_{k=1}^4 \mu_k +H(3\mathring{S}+2\sum_{i<j}\mu_i\mu_j) + \sum_{i\ne j}\mu_i^2\mu_j \\[1.5mm]
    &= 0 + H(3\mathring{S} - \mathring{S}) + \left( \Big(\sum_{i=1}^4 \mu_i^2 \Big) \Big(\sum_{j=1}^4 \mu_j \Big)-\sum_{k=1}^4 \mu_k^3\right)\\[1.5mm]
    &= 2H\mathring{S}-\mathring{f}_3.
\end{align*} 

\vspace{2mm}

\noindent
\textbf{Step 2: (Stokes' Theorem on $M^4$).}
Under the Willmore assumption, substituting
\eqref{eq:f30willmore} into the curvature term $\tilde{R}$ and applying
Stokes' theorem yield
\begin{align*}
    0
    &= \int_{\partial M^4}\Phi
     = \int_{M^4} d\Phi
     = \int_{M^4}
       \left(\sum_{k=1}^4 L_k h_{44k}^2-\tilde{R}\right)
       \mathrm{vol}\\[2mm]
    &= \int_{M^4}
       \left(
       \mathring{f}_3\sum_{k=1}^4 l_k h_{44k}^2
       +\mathring{f}_3-2H\mathring{S}
       \right)
       \mathrm{vol}\\[2mm]
    &= -H\mathring{S}
       \int_{M^4}
       \left(\sum_{k=1}^4 l_k h_{44k}^2+3\right)
       \mathrm{vol}.
\end{align*}
As $l_k\geq0$ for every $k$, 
the integral in the last line is strictly positive. Since
$\mathring{S}\ne0$, the preceding identity forces $H=0$. Hence $M^4$
is minimal. The theorem of Deng--Gu--Wei \cite{DGW} then implies that
$M^4$ is isoparametric. Since it has four distinct principal curvatures
everywhere, it is Cartan's minimal hypersurface.
\end{proof}

\section{Three Distinct Principal Curvatures Somewhere}
\begin{lemma}\label{lemma:no3root}
    Let $M^4 \hookrightarrow \mathbb{S}^5(1)$ be a closed CMC hypersurface with constant scalar curvature. If $M^4$ is Willmore, then $M^4$ cannot have 3 distinct principal curvatures anywhere.
\end{lemma}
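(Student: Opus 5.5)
The plan is to argue by contradiction, combining the local classification for $g\le 3$ with the Stokes argument of Lemma~\ref{lemma:4root}, extended across the coalescence set by a cut-off.

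\textbf{Step 1: reduce to $g\in\{3,4\}$ everywhere.} Suppose $M^4$ is Willmore with constant $H,S$ and has exactly three distinct principal curvatures at some point $P$.
\begin{itemize}
\item If $\mathring S=0$, then $M^4$ is totally umbilic, which is a contradiction. So $\mathring S>0$ is constant, and no point is umbilic.
\item If some point had exactly two distinct principal curvatures, Corollary~\ref{cor} would make $M^4$ an isoparametric Clifford torus with $g\equiv 2$. This contradicts the existence of $P$.
\end{itemize}
Hence $g\in\{3,4\}$ everywhere. Let $K=\{g=3\}$ and $U=\{g=4\}$. Then $K$ is closed and nonempty, since the discriminant of the characteristic polynomial is continuous, and $U$ is open.

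\textbf{Step 2: $K$ has empty interior.} Suppose $K$ contains an open set $V$. On $V$, $M^4$ is a piece of a CMC hypersurface with constant scalar curvature and $g=3$, so Theorem~\ref{th:ChenLiCMC} (Chen--Li) makes $V$ isoparametric. By analyticity of CMC hypersurfaces, $M^4$ is then globally isoparametric with $g=3$. Münzner's theorem forces equal multiplicities and $n=3m$, which is impossible for $n=4$. Therefore $K$ is nowhere dense and $U$ is dense.

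\textbf{Step 3: Stokes with a cut-off.} On $U$, the $3$-form $\Phi$ of Definition~\ref{theta:Phi} is defined. Steps (i)--(iii) of the proof of Lemma~\ref{lemma:4root} give, pointwise on $U$,
\[
d\Phi=\Bigl(-H\mathring S\Bigl(\sum_k l_k h_{44k}^2+3\Bigr)\Bigr)\mathrm{vol}.
\]
I would take cut-off functions $\varphi_\varepsilon$ vanishing near $K$, for instance
\[
\varphi_\varepsilon=\eta\bigl(\delta/\varepsilon\bigr),
\]
where $\delta$ is the minimal gap $\min_{i\ne j}|\mu_i-\mu_j|$, or the discriminant. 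Then
\[
0=\int_M d(\varphi_\varepsilon\Phi)=\int_M\varphi_\varepsilon\, d\Phi+\int_M d\varphi_\varepsilon\wedge\Phi .
\]
Since $l_k\ge 0$, the first term has a sign. By monotone convergence and Fatou it tends to
\[
-H\mathring S\int_U\Bigl(\sum_k l_kh_{44k}^2+3\Bigr)\mathrm{vol},
\]
whose absolute value is at least $3|H|\mathring S\,\mathrm{Vol}(M)$ if $H\ne 0$.

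The main obstacle is showing that the boundary term $\int d\varphi_\varepsilon\wedge\Phi\to 0$. The connection forms $\omega_{ij}$ blow up like $1/(\mu_i-\mu_j)$, so $|\Phi|\lesssim 1/\delta$ near $K$, while $|d\varphi_\varepsilon|\lesssim |\nabla\delta|/\varepsilon$ is supported where $\delta\sim\varepsilon$. I would try two things first:
\begin{itemize}
\item Use the algebra at $K$. There $\mu_1=\mu_2$ together with the constraints $\sum\mu_i=0$, $\sum\mu_i^2=\mathring S$ and $\mathring f_3=-H\mathring S$ pins down the triple $(\mu_1,\mu_3,\mu_4)$ to finitely many values. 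Differentiating the constraints, as in \eqref{solu}, should then show that the singular part of $\Phi$, coming from $\omega_{12}$ with coefficient $\mu_1+\mu_2$, pairs with $d\varphi_\varepsilon$ to give a term of size $O(1)$ on a set of volume $o(1)$.
\item Use that coalescence of two eigenvalues of a symmetric matrix is a codimension-two phenomenon, so $K$ has zero $2$-capacity. The coarea bound $\int_{\{\delta\sim\varepsilon\}}|\nabla\delta|/\delta^2\cdot\varepsilon^{-1}\cdot\delta\to 0$ should then follow as in the cut-off argument of \cite{SX}.
\end{itemize}

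\textbf{Step 4: conclude.} Granting the vanishing of the boundary term, Step 3 forces $H=0$. Then Theorem~\ref{thm:dgw} (or Theorem~\ref{thm:hxz}) makes $M^4$ isoparametric, with $S\in\{0,4,12\}$ and $g\in\{1,2,4\}$. This contradicts $K\neq\emptyset$ and proves the lemma.
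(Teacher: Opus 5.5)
\textbf{Where Step 3 breaks.} Your Steps 1, 2 and 4 are sound. Step 2, via Theorem~\ref{th:ChenLiCMC} and analyticity, is a more explicit reason than the paper's appeal to Cartan that $\{g=4\}$ is a nonempty open set. The gap is Step 3. Everything rests on $\int_M d\varphi_\varepsilon\wedge\Phi\to0$, which you do not prove and which the available estimates do not give.

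Near $\{\mu_1=\mu_2\}$, boundedness of $h_{11k}$ together with \eqref{solu} gives $|h_{44k}|\lesssim\delta$. So $|\nabla\delta|$ is bounded, but nothing makes it small. The singular part of $d\delta\wedge\Phi$ comes from $\mathcal W_{12}\theta_{12}$ and is of order $(h_{441}^2+h_{442}^2)/\delta^3$, which is in general comparable to $1/\delta$. Its coefficient does not degenerate on the collision set, since $\mathcal W_{12}=2\mu_1\neq0$ there. So the finitely many algebraic possibilities in your first bullet buy no cancellation. Consequently $|d\varphi_\varepsilon\wedge\Phi|$ is of size $\varepsilon^{-2}$ on $\{\delta\sim\varepsilon\}$, not $O(1)$. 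It would be $O(1)$ only if $|\nabla\delta|=O(\delta)$ there.

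This tends to zero only if $\mathrm{vol}\{\delta<\varepsilon\}=o(\varepsilon^2)$. Your second bullet does not fix this:
\begin{itemize}
\item In codimension two your displayed coarea quantity is itself $O(1)$, not $o(1)$.
\item Zero $2$-capacity controls $\int|\nabla\varphi|^2$, not $\int|\nabla\varphi|\,|\Phi|$ with $|\Phi|\sim1/\delta$. The latter is scale invariant in codimension two, and logarithmic cut-offs also give $O(1)$.
\item Codimension two is a genericity heuristic with no justification here. With $H,S,f_3$ fixed, $u=(\mu_2-\mu_1)^2$ is smooth near an endpoint collision, and $\{u=0\}$ is a critical level set of it, which may well be a hypersurface.
\end{itemize}

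\textbf{The missing idea: a sign, not smallness.} This is how the paper closes the argument; the boundary term only needs the right sign.
\begin{enumerate}
\item Normalize the orientation so that $\mathring f_3>0$. If $\mathring f_3=0$, then \eqref{eq:f30willmore} gives $H=0$ directly. By Lemma~\ref{lemma:3root} and constancy of $\mathring f_3$, the endpoint collisions $\mu_1=\mu_2$ and $\mu_3=\mu_4$ cannot both occur, and any middle collision has $\mu_2=\mu_3<0$.
\item Then $H<0$, and $d\Phi=\mathring f_3\,(\sum_k l_kh_{44k}^2+3)\,\mathrm{vol}\ge 3\mathring f_3\,\mathrm{vol}$ on $\{g=4\}$.
\item Compute $du\wedge\Phi$ with \eqref{solu}. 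The only $1/u$-singular coefficients are those of $h_{441}^2$ and $h_{442}^2$. They equal $-\tilde m_u\,\mathcal W_{12}$ times positive factors, with $\tilde m_u>0$ and $\mathcal W_{12}<0$.
\item Likewise for $v=(\mu_3-\mu_2)^2$: the singular coefficients involve $\tilde m_v<0$ and $\mathcal W_{23}=2\mu_2<0$ at the collision, so they are again positive.
\item All remaining coefficients stay bounded. Since $h_{44k}^2\lesssim u$ (resp.\ $v$), they contribute $O(\mathrm{vol}\{0<u<\varepsilon\}+\mathrm{vol}\{0<v<\varepsilon\})\to0$.
\item With $\eta_\varepsilon'\ge0$, Stokes gives $3\mathring f_3\,\mathrm{vol}\{w\ge\varepsilon\}\le\int\eta_\varepsilon(w)\,d\Phi=-\int\eta_\varepsilon'(w)\,dw\wedge\Phi\le o(1)$. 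This contradicts $\mathrm{vol}\{g=4\}>0$.
\end{enumerate}
Your cut-off in the minimal gap could be run the same way, but only once this sign information is in hand, and your proposal never invokes it.
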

Assume that the number of distinct principal curvatures is either 3 or 4 at every point of $M^4$. The two endpoint collision
types
\[
\lambda_1=\lambda_2
\ \ \text{and}\ \ 
\lambda_3=\lambda_4
\]
cannot both occur on $M^4$. After reversing the choice of unit normal
if necessary, we may therefore assume that
$\lambda_1=\lambda_2$ is the only possible endpoint collision. In
particular, $\lambda_3<\lambda_4$ everywhere on $M^4$.
\begin{lemma}\label{lemma:3root}
Suppose that $\mu_1\leq\mu_2\leq\mu_3\leq\mu_4$ and
$\sum_i\mu_i=0$,
\begin{itemize}
  \item[(i)] If $\mu_1=\mu_2<\mu_3<\mu_4$ at $P \in M^4$, then
  $\mathring f_3>0$.
  \item[(ii)] If $\mu_1<\mu_2<\mu_3=\mu_4$ at $P \in M^4$, then
  $\mathring f_3<0$.
  \item[(iii)] If $\mu_1 < \mu_2=\mu_3<\mu_4$ and
  $\mathring f_3>0$ at $P \in M^4$, then $\mu_2=\mu_3<0$.
\end{itemize}
\end{lemma}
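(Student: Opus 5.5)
Since all three parts are purely algebraic statements about four real numbers with zero sum, I would prove each by writing down an explicit parametrization of the prescribed multiplicity pattern and computing $\mathring f_3=\sum_i\mu_i^3$ directly. The tool throughout is the factorization, valid when $\sum_i\mu_i=0$, of the sum of cubes of three of the numbers in terms of the fourth. More concretely, for $a+b+c=-d$ one has $a^3+b^3+c^3=3abc+(a+b+c)\bigl(a^2+b^2+c^2-ab-bc-ca\bigr)$. Alternatively, one can simply substitute the parametrization and factor by hand, which is what I plan to do.

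For (i), I set $\mu_1=\mu_2=a$ and $\mu_3=b$, so that $\mu_4=-2a-b$, with $a<b<-2a-b$. These inequalities give $3a+b<0$, i.e.\ $b<-3a$, and $a<b$ forces $a<0$. I then compute
\[
\mathring f_3=2a^3+b^3-(2a+b)^3=-6a^3-12a^2b-6ab^2=-6a(a+b)^2 .
\]
Since $a<0$, this is nonnegative. It vanishes only if $b=-a$, in which case $\mu_4=-a=b$, contradicting $\mu_3<\mu_4$. Hence $\mathring f_3>0$.

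Part (ii) then follows from (i) by the symmetry $\mu_i\mapsto-\mu_i$ together with reversing the order of the indices. This maps the configuration $\mu_1<\mu_2<\mu_3=\mu_4$ to one of type (i) and changes the sign of $\mathring f_3$, so $\mathring f_3<0$.

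For (iii), I set $\mu_2=\mu_3=c$ and $\mu_1=a$, so that $\mu_4=-a-2c$, with $a<c<-a-2c$. The same kind of expansion gives
\[
\mathring f_3=a^3+2c^3-(a+2c)^3=-6a^2c-12ac^2-6c^3=-6c(a+c)^2 .
\]
Now $a+c\neq0$, since $a+c=0$ would force $\mu_4=c$, contradicting $\mu_3<\mu_4$. Therefore $\mathring f_3>0$ forces $c<0$, which is exactly $\mu_2=\mu_3<0$.

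There is no serious obstacle here; the only point requiring care is using the strict inequalities to rule out the degenerate factor $(a+b)^2$ or $(a+c)^2$ vanishing.
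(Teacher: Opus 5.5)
Your proof is correct and is essentially the paper's argument. Your factorizations $-6a(a+b)^2$ and $-6c(a+c)^2$ are exactly the paper's $3(\mu_1+\mu_2)(\mu_3\mu_4-\mu_1\mu_2)$ and $6\mu_2(\mu_1\mu_4-\mu_2^2)$ written in your parametrization, and (ii) follows by the same orientation reversal. There is one small slip in (iii): since $a+c=(\mu_1-\mu_4)/2$, the case $a+c=0$ forces $\mu_4=-a-2c=-c=\mu_1$, not $\mu_4=c$. The contradiction is therefore with $\mu_1<\mu_4$, and your conclusion stands.
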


\begin{proof}
For (i), since
    \begin{align*}
        \mu_1^3 + \mu_2^3 + \mu_3^3 + \mu_4^3 &= (\mu_1 + \mu_2)^3 - 3\mu_1\mu_2(\mu_1+\mu_2) + (\mu_3+\mu_4)^3 - 3\mu_3 \mu_4(\mu_3+\mu_4)\\[1.5mm]
        &=3(\mu_1+\mu_2)(\mu_3\mu_4 - \mu_1\mu_2),
    \end{align*}
so we only need to consider the sign of $\mu_3\mu_4 - \mu_1\mu_2$.
At $P$, 
\[\mu_3\mu_4<\left(\frac{\mu_3+\mu_4}{2}\right)^2 = \left(\frac{\mu_1+\mu_2}{2}\right)^2 = \mu_1^2,\]
and since $\mathring{f}_3$ is constant on $M^4$, so on $M^4$,
\[\mathring{f}_3 \equiv \mathring{f}_3(P) > 0.\]  (ii) follows from (i) after reversing the orientation.

\vspace{2mm}

To show (iii), if $\mu_2=\mu_3$ at $P$, then
\begin{align}\label{eq:mu2>0}
    0 <\mathring{f}_3 =( \mu_1^3 + \mu_4^3) +2\mu_2^3 &= (-2\mu_2)^3 - 3\mu_1\mu_4\cdot(-2\mu_2) + 2\mu_2^3 \nonumber \\[1.5mm]
    &=6(\mu_1\mu_2\mu_4 - \mu_2^3).
\end{align}
Since 
\[\mu_2^2-\mu_1\mu_4 =\left(\frac{\mu_1 - \mu_4}{2}\right)^2 >0,\]
\eqref{eq:mu2>0} implies $0>\mu_2=\mu_3$.
\end{proof}

In this section, if the cases $\mu_1=\mu_2$ and $\mu_2 = \mu_3$ both happen, by Lemma \ref{lemma:3root}, $\mathring{f}_3>0$. If only one of them happens, then by reversing the orientation, we can always assume $\mathring{f}_3>0$. So we assume $\mathring{f}_3 > 0$ through this section.

\vspace{3mm}

Let $$u =(\lambda_2-\lambda_1)^2=(\mu_2-\mu_1)^2,\  v =(\lambda_3-\lambda_2)^2=(\mu_3-\mu_2)^2,$$
then there exists $\epsilon_0 > 0$ s.t.
\[ u + v \ge 2\epsilon_0.\]

\begin{lemma}
    There exists $\delta>0$ and a constant $C=C(\delta, H,S,f_3)$, s.t. 
    \begin{itemize}
        \item[(i)] on the set $\{0<u<\delta\}$,
        $du \wedge \Phi > C\left(\sum_{k=1}^4h_{44k}^2\right)\mathrm{vol}$;
        \item[(ii)]  on the set $\{0<v<\delta\}$,
        $dv \wedge \Phi > C\left(\sum_{k=1}^4h_{44k}^2\right)\mathrm{vol}$.
    \end{itemize}
\end{lemma}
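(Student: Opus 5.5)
We compute $du\wedge\Phi$ pointwise on the open set $\{0<u<\delta\}$. There all four principal curvatures are distinct, so the Cramer expressions \eqref{solu} for $h_{11k},h_{22k},h_{33k}$ in terms of $h_{44k}$ hold, and the connection forms $\omega_{ij}$ are given by $\sum_k \mathring h_{ijk}\omega_k/(\mu_i-\mu_j)$. Part (ii) is the same computation with indices $2,3$ in place of $1,2$. Because the Willmore condition gives $\mathring f_3=-H\mathring S>0$ in this section, we only need to fix signs once.

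\textbf{Step 1 (expanding the wedge).} Write $du=2(\mu_2-\mu_1)\sum_k(h_{22k}-h_{11k})\omega_k$. Each $\theta_{ij}$ already contains two coframe factors, so in $du\wedge\theta_{ij}$ only the two complementary $\omega_k$ survive, paired with the corresponding coefficients of $\omega_{ij}$. For example,
\[
du\wedge\theta_{12}=2(\mu_2-\mu_1)\bigl((h_{221}-h_{111})\omega_1+(h_{222}-h_{112})\omega_2\bigr)\wedge\omega_3\wedge\omega_4\wedge\frac{h_{121}\omega_1+h_{122}\omega_2}{\mu_1-\mu_2}.
\]
The factor $(\mu_2-\mu_1)/(\mu_1-\mu_2)=-1$ cancels the dangerous denominator. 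We obtain a quadratic form of the type
\[
2\bigl[(h_{221}-h_{111})h_{112}-(h_{222}-h_{112})h_{111}\bigr](\mu_1+\mu_2)\,\mathrm{vol},
\]
up to sign. The other five $\theta_{ij}$ contribute terms such as $2(\mu_2-\mu_1)(\mu_i+\mu_j)(h_{22k}-h_{11k})h_{ijl}/(\mu_i-\mu_j)$ with $\{i,j\}\ne\{1,2\}$. The terms with $\{i,j\}\cap\{1,2\}\neq\emptyset$ still contain the denominator $\mu_1-\mu_j$ or $\mu_2-\mu_j$, which stays bounded away from $0$ when $\delta$ is small: since $u+v\ge2\epsilon_0$, $u<\delta$ forces $v\ge\epsilon_0$, and $\mu_3<\mu_4$ everywhere by the endpoint discussion. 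So all those terms carry the small factor $\mu_2-\mu_1=O(\sqrt\delta)$.

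\textbf{Step 2 (reduction to $h_{44k}$).} Substitute \eqref{solu} into the $\theta_{12}$ term. The differences $h_{22k}-h_{11k}$ and the products $h_{11k}h_{22l}$ involve coefficients with denominators $(\mu_2-\mu_1)$. The key point to check is that
\[
h_{11k}=-\frac{(\mu_3-\mu_4)(\mu_2-\mu_4)}{(\mu_3-\mu_1)(\mu_2-\mu_1)}h_{44k},\qquad h_{22k}=\frac{(\mu_4-\mu_3)(\mu_4-\mu_1)}{(\mu_2-\mu_3)(\mu_2-\mu_1)}h_{44k}
\]
have leading parts $\pm A\,h_{44k}/(\mu_2-\mu_1)$ with the same $A=(\mu_4-\mu_3)(\mu_4-\mu_1)/(\mu_3-\mu_1)+O(\mu_2-\mu_1)$, and opposite signs. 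Hence $h_{11k}+h_{22k}=O(1)h_{44k}$, while $h_{22k}-h_{11k}\approx 2A\,h_{44k}/(\mu_2-\mu_1)$. Plugging in, the principal term of $du\wedge\Phi$ becomes
\[
c(\mu)\,(\mu_1+\mu_2)\,\frac{A^2}{(\mu_2-\mu_1)^2}\bigl(h_{441}^2+h_{442}^2\bigr)\mathrm{vol}.
\]
It comes with $(\mu_2-\mu_1)$ factors partially cancelling, and I expect the result to be of order $(\mu_2-\mu_1)^{-1}$ or $O(1)$ times $\sum_k h_{44k}^2$. Lemma~\ref{lemma:3root}(i) and its consequence $\mathring f_3>0$ should give a definite sign: on $\{\mu_1=\mu_2\}$ one has $\mu_1=\mu_2<0$ because $\mu_3\mu_4<\mu_1^2$ and $\mathring f_3=3(\mu_1+\mu_2)(\mu_3\mu_4-\mu_1\mu_2)>0$. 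By continuity $\mu_1+\mu_2\le -c_0<0$ on $\{u<\delta\}$.

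\textbf{Step 3 (absorbing the remainder).} All remaining terms, including the $\omega_3,\omega_4$ directions in which $h_{113},h_{114},\dots$ appear, are bounded by $C'\sum_k h_{44k}^2$, with constants depending only on $\epsilon_0$ and on the bounds for $\mu_i$ coming from constant $\mathring S$. Cross terms $h_{123}^2,h_{124}^2$ either cancel as in \eqref{crossing} or carry the factor $\mu_2-\mu_1$. Shrinking $\delta$ then lets the principal term dominate, which gives a constant $C$ with the stated inequality, after fixing the orientation of $\mathrm{vol}$. The constant $C$ may well be negative; the statement only asks for a lower bound, and the later cut-off argument uses it that way.

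The main obstacle is Step 2: one must track exactly which powers of $(\mu_2-\mu_1)$ survive after substituting \eqref{solu}, and verify that the leading coefficient is a sign-definite multiple of $\sum_k h_{44k}^2$ rather than an indefinite quadratic form. I would first carry this out for the $\theta_{12}$ contribution symbolically, expanding in $\mu_2-\mu_1$, and then check the $\omega_3,\omega_4$ components separately.
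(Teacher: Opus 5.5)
Your plan (expand $du\wedge\Phi$, substitute \eqref{solu}, isolate the singular terms as $u\to0$, use the sign of $\mu_1+\mu_2$) is the paper's, but the two points that actually carry the lemma are missing or argued incorrectly.

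\textbf{The remainder.} In Step~1 you say the contributions of $\theta_{ij}$, $\{i,j\}\neq\{1,2\}$, are harmless because they carry the factor $\mu_2-\mu_1$. That only bounds them by $O(\sqrt\delta)\sum h_{ijk}^2$, not by $C\sum_k h_{44k}^2$. Written in terms of $h_{44k}$, the factor is used up:
\[
u_k=2(\mu_2-\mu_1)(h_{22k}-h_{11k})=\tilde m_u h_{44k},
\]
with $\tilde m_u$ of order one, while $h_{113},h_{223},h_{114},h_{224}$ are of order $(\mu_2-\mu_1)^{-1}h_{44k}$. Hence $\mathcal W_{13}\,du\wedge\theta_{13}$ and $\mathcal W_{23}\,du\wedge\theta_{23}$ each contain a term of order $(\mu_2-\mu_1)^{-1}h_{443}^2$, of indefinite sign. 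Likewise $\theta_{14}$ and $\theta_{24}$ each produce one of order $(\mu_2-\mu_1)^{-1}h_{444}^2$. The principal terms multiply only $h_{441}^2$ and $h_{442}^2$, so they cannot absorb these. Nothing in your argument controls them; what saves the lemma is that they cancel in pairs. For $h_{443}^2$ the singular part is
\[
\frac{\tilde m_u(\mu_4-\mu_3)}{\mu_2-\mu_1}\Bigl[\frac{(\mu_2+\mu_3)(\mu_4-\mu_1)}{(\mu_3-\mu_2)^2}-\frac{(\mu_1+\mu_3)(\mu_4-\mu_2)}{(\mu_3-\mu_1)^2}\Bigr].
\]
The bracket vanishes at $\mu_1=\mu_2$, hence is divisible by $\mu_2-\mu_1$, and the $h_{444}^2$ coefficient behaves the same way. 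This divisibility check is the missing idea.

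Your appeal to \eqref{crossing} is misplaced: $du\wedge\Phi$ contains no $h_{123}^2$ or $h_{124}^2$ at all. Each $\theta_{ij}$ already contains the two complementary coframe forms, so only $h_{ijl}$ with $l\in\{i,j\}$ survive. Thus $du\wedge\Phi$ is a \emph{diagonal} quadratic form in $h_{441},\dots,h_{444}$, which also settles your worry about indefiniteness. Your $\theta_{12}$ expression also pairs the wrong entries: the $\omega_1$-part of $du$ meets $h_{122}=h_{221}$, the $\omega_2$-part meets $h_{121}=h_{112}$, and $h_{111}$ does not occur.

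\textbf{The principal term.} The $\theta_{12}$ contribution is genuinely of order $(\mu_2-\mu_1)^{-2}$, with no further cancellation to $(\mu_2-\mu_1)^{-1}$ or $O(1)$ as you guess. For example, the $h_{441}^2$-coefficient contains
\[
B_{u,1}=-\tilde m_u(\mu_1+\mu_2)\frac{(\mu_4-\mu_3)(\mu_4-\mu_1)}{(\mu_3-\mu_2)(\mu_2-\mu_1)^2}.
\]
The lemma holds only because this is \emph{positive}; ``sign-definite'' is not enough. A dominant negative term would send the coefficient of $h_{441}^2$ to $-\infty$. Neither a negative $C$ nor a choice of orientation rescues that, since the ratio $du\wedge\Phi/\mathrm{vol}$ is independent of orientation. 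Positivity comes from $\tilde m_u>0$, $(\mu_4-\mu_3)(\mu_4-\mu_1)/(\mu_3-\mu_2)>0$ and $\mu_1+\mu_2<0$. You establish the last fact but never fix the sign of your $c(\mu)$, which is the whole point.

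\textbf{Part (ii).} This is not literally the same computation. There $\tilde m_v<0$, and the leading coefficients are $\tilde m_v(\mu_2+\mu_3)$ times positive factors over $(\mu_3-\mu_2)^2$. Positivity therefore needs $\mu_2+\mu_3<0$ near $\{v=0\}$. That does not follow from the ordering; it comes from Lemma~\ref{lemma:3root}(iii) together with $\mathring f_3>0$.
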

\begin{proof}
\textbf{Step 1: ($du\wedge\Phi$). } 
\[du \wedge \Phi = \sum_{i=1}^4  u_i\omega_i \wedge \Phi,\]
Denote $\mathcal{W}_{ij}= \mu_i + \mu_j,$ taking the first derivative of $ u=(\mu_2-\mu_1)^2$, we obtain

\begin{align*}
    u_i = e_i(u)&=2(\mu_2 - \mu_1)(h_{22i} - h_{11i})\\[2mm]
    &= 2(\mu_2 - \mu_1) \left[ \frac{(\mu_4 - \mu_3)(\mu_4 - \mu_1)}{(\mu_2 - \mu_1)(\mu_3 - \mu_2)} - \left( -\frac{(\mu_4 - \mu_3)(\mu_4 - \mu_2)}{(\mu_3 - \mu_1)(\mu_2 - \mu_1)} \right) \right] h_{44i} \nonumber \\[2mm]
    &= 2(\mu_4 - \mu_3) \left( \frac{\mu_4 - \mu_1}{\mu_3 - \mu_2} + \frac{\mu_4 - \mu_2}{\mu_3 - \mu_1} \right) h_{44i} =: \tilde{m}_u h_{44i},
\end{align*}
where $\tilde{m}_u >0.$ Substituting into $du\wedge \Phi$, we have 
\begin{align*}
du \wedge \Phi =& \sum_{i=1}^4 u_i  \omega_i \wedge \Phi \nonumber \\[2mm]
=&\  \tilde{m}_u \left[ - \mathcal{W}_{12}\frac{(\mu_4 - \mu_3)(\mu_4 - \mu_1)}{(\mu_3 - \mu_2)(\mu_2 - \mu_1)^2} + \mathcal{W}_{13}\frac{(\mu_4 - \mu_2)(\mu_4 - \mu_1)}{(\mu_3 - \mu_2)(\mu_3 - \mu_1)^2} - \mathcal{W}_{14}\frac{1}{\mu_4 - \mu_1} \right] h_{441}^2 \mathrm{vol} \\[2mm]
& + \tilde{m}_u \left[ \mathcal{W}_{23}\frac{(\mu_4 - \mu_2)(\mu_4 - \mu_1)}{(\mu_3 - \mu_1)(\mu_3 - \mu_2)^2} - \mathcal{W}_{24}\frac{1}{\mu_4 - \mu_2} - \mathcal{W}_{12}\frac{(\mu_4 - \mu_3)(\mu_4 - \mu_2)}{(\mu_3 - \mu_1)(\mu_2 - \mu_1)^2} \right] h_{442}^2 \mathrm{vol} \\[2mm]
& + \tilde{m}_u \left[ - \mathcal{W}_{34}\frac{1}{\mu_4 - \mu_3} + \mathcal{W}_{23}\frac{(\mu_4 - \mu_3)(\mu_4 - \mu_1)}{(\mu_3 - \mu_2)^2(\mu_2 - \mu_1)} - \mathcal{W}_{13}\frac{(\mu_4 - \mu_3)(\mu_4 - \mu_2)}{(\mu_3 - \mu_1)^2(\mu_2 - \mu_1)} \right] h_{443}^2 \mathrm{vol} \\[2mm]
&  + \tilde{m}_u \left[ - \mathcal{W}_{14}\frac{(\mu_4 - \mu_3)(\mu_4 - \mu_2)}{(\mu_3 - \mu_1)(\mu_2 - \mu_1)(\mu_4 - \mu_1)} + \mathcal{W}_{24}\frac{(\mu_4 - \mu_3)(\mu_4 - \mu_1)}{(\mu_2 - \mu_1)(\mu_3 - \mu_2)(\mu_4 - \mu_2)} \right.  \\[2mm]
&\ \ \ \ \ \ \ \ \  \left. - \mathcal{W}_{34}\frac{(\mu_4 - \mu_2)(\mu_4 - \mu_1)}{(\mu_3 - \mu_1)(\mu_3 - \mu_2)(\mu_4 - \mu_3)} \right] h_{444}^2 \mathrm{vol}.
\end{align*}

\vspace{2mm}

 \textbf{Step 2: (singularity near $u=0$)}. Records the full coefficients of
$du\wedge\Phi$. As $u\to0$, the only genuinely second-order
singular terms are
\begin{align*}
 B_{u,1}
 &=-\tilde{m}_u(\mu_1+\mu_2)
 \frac{(\mu_4-\mu_3)(\mu_4-\mu_1)}
 {(\mu_3-\mu_2)(\mu_2-\mu_1)^2},\ \ 
 B_{u,2}
 =-\tilde{m}_u(\mu_1+\mu_2)
 \frac{(\mu_4-\mu_3)(\mu_4-\mu_2)}
 {(\mu_3-\mu_1)(\mu_2-\mu_1)^2}.
\end{align*}
However, since $\mu_1\le \mu_2 \le \mu_3 \le \mu_4$, there exists $\delta_1>0$, s.t. on the set $\{0<u<\delta_1\}$, $\mu_1+\mu_2<0$. Consequently, these second-order singular terms are strictly positive. 

\vspace{3mm}

All other first-order singular
terms cancel in pairs. Indeed, the relevant coefficient of
$h_{443}^2$ contains
\[
 \frac{m_u(\mu_4-\mu_3)}{\mu_2-\mu_1}
 \left[
 \frac{(\mu_2+\mu_3)(\mu_4-\mu_1)}{(\mu_3-\mu_2)^2}
 -\frac{(\mu_1+\mu_3)(\mu_4-\mu_2)}{(\mu_3-\mu_1)^2}
 \right] =: \frac{\mathcal{F}(\mu_1, \mu_2, \mu_3, \mu_4)}{\mu_2 - \mu_1}.
\]
Notice that $\mathcal{F}$ vanishes when $\mu_2=\mu_1$, so it is divisible by
$\mu_2-\mu_1$. As every $\mu_i$ is uniformly bounded on $M^4$, the coefficient of $h_{443}^2$ is actually bounded. The first-order terms multiplying $h_{444}^2$ have the
same divisibility property. Since all other spectral gaps are uniformly
bounded away from zero near this compact collision set, every remainder
is uniformly bounded. This proves (i).

\vspace{2mm}

\textbf{Step 3: (singularity near $\{v=0\}$)}. Taking first derivative of $v=(\mu_3 - \mu_2)^2$, we have 
\begin{align*}
    v_i = e_i(v) &= 2(\mu_3 - \mu_2)(h_{33i} - h_{22i})\\[1.5mm]
    &= 2(\mu_3 - \mu_2)\left( -\frac{(\mu_4 - \mu_1)(\mu_4 - \mu_2)}{(\mu_3 - \mu_1)(\mu_3 - \mu_2)} - \frac{(\mu_4 - \mu_3)(\mu_4 - \mu_1)}{(\mu_3 - \mu_2)(\mu_2 - \mu_1)} \right) h_{44i} \\[1.5mm]
    &=-2(\mu_4 - \mu_1)\left( \frac{\mu_4 - \mu_2}{\mu_3 - \mu_1} + \frac{\mu_4 - \mu_3}{\mu_2 - \mu_1}\right) h_{44i}=:\tilde{m}_v h_{44i},
\end{align*}
where $\tilde{m}_v < 0.$

Similarly, the remaining first-order
singular terms are bounded by the same divisibility argument. As $v \to 0$, the genuinely second-order singular terms multiply
$h_{442}^2$ and $h_{443}^2$, respectively, and their singular parts are
\begin{align*}
 B_{v,2}
 & = \tilde{m}_v(\mu_2+\mu_3)
 \frac{(\mu_4-\mu_2)(\mu_4-\mu_1)}
 {(\mu_3-\mu_1)(\mu_3-\mu_2)^2},\\
 B_{v,3}
 &= \tilde{m}_v(\mu_2+\mu_3)
 \frac{(\mu_4-\mu_3)(\mu_4-\mu_1)}
 {(\mu_2-\mu_1)(\mu_3-\mu_2)^2},
\end{align*}
by Lemma \ref{lemma:3root}, if $v=0$, then $\mathcal{W}_{23}=2\mu_2 <0$. So there exists $\delta_2>0$, s.t. on the set $\{0<v<\delta_2\}$,  $\mathcal{W}_{23} < 0$. Also $\tilde{m}_v <0$, so $B_{v,2}, B_{v,3}$ terms are strictly positive. 

The remaining first-order
singular terms are bounded by the same divisibility argument. This proves (ii).

To make (i) and (ii) be valid simultaneously, we set $\delta = \min\{\delta_1, \delta_2\}$.

\end{proof}

Now we consider $\min\{u,v\}$. To smooth it on $M$, we introduce a smooth even function  $q:\R\to\R$ as in \cite{G}: 
\[
 q(t)\geq|t|,\ \ 
 |q'(t)|\leq1,\ \ 
 q(t)=|t|\ \text{when }|t|\geq \epsilon_0/2,\ \ q''(t)\ge0,
\]
and set
\begin{equation}\label{eq:w}
 w :=\frac{u+v-q(u-v)}2.
\end{equation}
to replace $\min\{u,v\}$.

\vspace{3mm}

 Observe that $u$ and $v$ cannot be identically equal on $M^4$. If so, there will be four distinct principal curvatures everywhere on $M^4$m which returns to Section 4. Therefore the coincidence set $ E := \{u = v\}$ is a closed proper subset of $M^4$.

\vspace{2mm}

Setting $E_{\epsilon_0} := \{ |u - v| \le \epsilon_0\} \supset E$, we have the following properties of $w$:

\begin{itemize}
    \item[(i)] $w = \dfrac{u + v}{2}-\dfrac{|u-v|}{2} = \min\{u, v\}\ge 0$ on $M\setminus E_{\epsilon_0}$. Thus, $w = 0$ iff. $\min\{u,v\} = 0$. 
    \vspace{2mm}
    \item[(ii)] $w\ge \epsilon_0 - \dfrac{\epsilon_0}{2} \ge \dfrac{\epsilon_0}{2}$  on $E_{\epsilon_0}$,
\end{itemize} 

\vspace{5mm}

Let
\[ X = U_\epsilon \sqcup W_\epsilon \sqcup V_\epsilon := \{0 < u < \epsilon\} \sqcup \{w \ge \epsilon\} \sqcup \{0 < v < \epsilon\}. \]
By \cite{C}, there is no 4-dimensional isoparametric hypersurface with $g=3$ in $\mathbb{S}^5(1)$, it ensures that $X\ne\emptyset$.

 \vspace{2mm}
 
For $0 < \epsilon < \min\{\delta, \epsilon_0/2\}$ chosen sufficiently small, we introduce a smooth cut-off function $\eta_\epsilon : \mathbb{R} \to [0,1]$ such that
\begin{itemize}
    \item[(a)] $0 \le \eta_\epsilon \le 1$;

\vspace{1.5mm}
    
    \item[(b)] $\eta_\epsilon(t) = 0$ for $t \le \frac{\epsilon}{3}$;

\vspace{1.5mm}

    \item[(c)] $\eta_\epsilon(t) = 1$ for $t \ge \epsilon$;
    
\vspace{1.5mm}
    
    \item[(d)] $0 \le \eta_\epsilon'(t) \le O \left( \frac{1}{\epsilon} \right)$ for $t \in \left(\frac{\epsilon}{3}, \epsilon\right)$, and $\eta_\epsilon' \equiv 0$ everywhere else.
\end{itemize}

\vspace{2mm}

By Stokes' Theorem and the identity $d\big((\eta_\epsilon \circ w) \Phi\big) = (\eta_\epsilon \circ w)\, d\Phi + (\eta'_\epsilon \circ w)\, dw \wedge \Phi$, we have
\[ \int_X (\eta_\epsilon \circ w)\, d\Phi + \int_X (\eta'_\epsilon \circ w)\, dw \wedge \Phi = 0. \]
Recall that under the assumption of condition, we have $0<\mathring{f}_3 = -H\mathring{S}$,
therefore, 
\begin{align*}
0 &\le  \int_X (\eta_\epsilon \circ w) \left( \sum_{k=1}^4 L_k h_{44k}^2 - \tilde{R} \right) \mathrm{vol} = - \int_X (\eta'_\epsilon \circ w)\, dw \wedge \Phi \\[2mm]
& = - \int_{U_\epsilon} (\eta'_\epsilon \circ u)\, du \wedge \Phi - \int_{V_\epsilon} (\eta'_\epsilon \circ v)\, dv \wedge \Phi \\[2mm]
& \lesssim -\frac{C}{\epsilon}\int_{U_\epsilon \sqcup V_\epsilon} \sum_{k=1}^4 h_{44k}^2\ \mathrm{vol}. 
\end{align*}

Since $\mathring{S}(\mathring{S}-4-4H^2) -4H\mathring{f}_3 = \sum h_{ijk}^2$ is constant, each $h_{ijk}$ is uniformly bounded on $M^4$. Hence, 
\begin{align*}
        |h_{11k}| &= \left|- \frac{(\mu_3 - \mu_4)(\mu_2 - \mu_4)}{(\mu_3 - \mu_1)(\mu_2 - \mu_1)} h_{44k}\right| \quad \text{is bounded on } U_\epsilon;\\[1.5mm]
        |h_{33k}| & = \left|   -\frac{(\mu_4 - \mu_1)(\mu_4 - \mu_2)}{(\mu_3 - \mu_1)(\mu_3 - \mu_2)} h_{44k}\right| \quad \text{is bounded on } V_\epsilon.
\end{align*}
This implies that $|h_{44k}| \lesssim \sqrt{\epsilon}$ on $U_\epsilon \sqcup V_\epsilon$. Consequently, 
\begin{equation}\label{5:calR}
    0 \le \int_X(\eta_\epsilon \circ w) \left( \sum_{k=1}^4 L_k h_{44k}^2 - \tilde{R} \right) \mathrm{vol} \lesssim -C\, \mathrm{vol}(U_\epsilon \sqcup V_\epsilon).
\end{equation}

Notice that $\mathrm{vol}(U_\epsilon) \le \mathrm{vol}(M^4)$ and $\mathrm{vol}(V_\epsilon) \le \mathrm{vol}(M^4)$, both of which are finite. Since $\epsilon > 0$ can be chosen arbitrarily small, taking the limit as $\epsilon \to 0^+$ yields
\[ \lim_{\epsilon\to 0^+} \mathrm{vol}(U_\epsilon) = \mathrm{vol}\left(\bigcap_{\epsilon>0} U_\epsilon\right) = \mathrm{vol}(\emptyset) = 0, \]
\[ \lim_{\epsilon\to 0^+} \mathrm{vol}(V_\epsilon) = \mathrm{vol}\left(\bigcap_{\epsilon>0}V_\epsilon\right) = \mathrm{vol}(\emptyset) = 0. \]
Thus, as $\epsilon \to 0^+$, the inequality \eqref{5:calR} becomes
\[ 0 = \int_{X} \left(\sum_{k=1}^{4} L_k h_{44k}^2 - \tilde{R}\right) \mathrm{vol}. \]
Since $X$ is a nonempty open set, it tells $H\equiv 0$ on $X$. But $H$ is constant on $M^4$, so $M^4$ is minimal. By the result in \cite{DGW}, $M^4$ cannot have 3 distinct principal curvatures anywhere.

\vspace{5mm}

\subsection{Proof of Theorem \ref{th:willmore}}.

\addtocontents{toc}{\protect\setcounter{tocdepth}{-1}} 
We are now at the place to conclude Theorem \ref{th:willmore} by considering the following cases.

\subsubsection*{Case 1}
If there exits one point with no distinct principal curvatures, then $\mathring{S}$ is zero 
and hence $M^4$ is a totally umbilic geodesic sphere $\mathbb{S}^4(r)$ with with $0<r\le1$.

\subsubsection*{Case 2} 
If there exists a point with exactly two distinct principal curvatures, then by Corollary \ref{cor}, $M^4$ is a Clifford torus $\mathbb{S}^2\left(\frac{\sqrt{2}}{2}\right)\times \mathbb{S}^2\left(\frac{\sqrt{2}}{2}\right)$ or $ \mathbb{S}^1\left(\frac{\sqrt{3}}{2}\right) \times \mathbb{S}^3\left(\frac{1}{2}\right)$.

\subsubsection*{Case 3} 
If $M^4$ has exactly four distinct principal curvatures everywhere, then $M^4$ is isoparametric thus is Cartan's minimal hypersurface by Theorem \ref{lemma:4root}. 

\subsubsection*{Case 4} 
If there exists a point with exactly three distinct principal curvatures, this contradicts \ref{lemma:no3root}.

\vspace{2mm}

The proof is complete.

\end{document}